\newtheorem{theorem}{Theorem}
\newtheorem{prop}[theorem]{Proposition}
\newtheorem{example}[theorem]{Example}
\newtheorem{definition}[theorem]{Definition}
\title{A New Crossover Algorithm for LP Inspired by the Spiral Dynamic of PDHG}
\author{Tianhao Liu\thanks{Shanghai Jiao Tong University, Antai College of Economics and Management (tianhao.liu@sjtu.edu.cn).} \and Haihao Lu\thanks{MIT, Sloan School of Management (haihao@mit.edu).}}
\date{}
\begin{document}
\global\long\def\inprod#1#2{\left\langle #1,#2\right\rangle }%
\global\long\def\inner#1#2{\langle#1,#2\rangle}%
\global\long\def\binner#1#2{\big\langle#1,#2\big\rangle}%
\global\long\def\norm#1{\Vert#1\Vert}%
\global\long\def\bnorm#1{\big\Vert#1\big\Vert}%
\global\long\def\Bnorm#1{\Big\Vert#1\Big\Vert}%
\global\long\def\red#1{\textcolor{red}{#1}}%
\global\long\def\blue#1{\textcolor{blue}{#1}}%
\global\long\def\KKT{\text{KKT}}%

\global\long\def\brbra#1{\big(#1\big)}%
\global\long\def\Brbra#1{\Big(#1\Big)}%
\global\long\def\rbra#1{(#1)}%

\global\long\def\sbra#1{[#1]}%
\global\long\def\bsbra#1{\big[#1\big]}%
\global\long\def\Bsbra#1{\Big[#1\Big]}%
\global\long\def\abs#1{\vert#1\vert}%
\global\long\def\babs#1{\big\vert#1\big\vert}%

\global\long\def\cbra#1{\{#1\}}%
\global\long\def\bcbra#1{\big\{#1\big\}}%
\global\long\def\Bcbra#1{\Big\{#1\Big\}}%
\global\long\def\vertiii#1{\left\vert \kern-0.25ex  \left\vert \kern-0.25ex  \left\vert #1\right\vert \kern-0.25ex  \right\vert \kern-0.25ex  \right\vert }%
\global\long\def\matr#1{\bm{#1}}%
\global\long\def\til#1{\tilde{#1}}%
\global\long\def\wtil#1{\widetilde{#1}}%
\global\long\def\wh#1{\widehat{#1}}%
\global\long\def\mcal#1{\mathcal{#1}}%
\global\long\def\mbb#1{\mathbb{#1}}%
\global\long\def\mtt#1{\mathtt{#1}}%
\global\long\def\ttt#1{\texttt{#1}}%
\global\long\def\dtxt{\textrm{d}}%
\global\long\def\bignorm#1{\bigl\Vert#1\bigr\Vert}%
\global\long\def\Bignorm#1{\Bigl\Vert#1\Bigr\Vert}%
\global\long\def\rmn#1#2{\mathbb{R}^{#1\times#2}}%
\global\long\def\deri#1#2{\frac{d#1}{d#2}}%
\global\long\def\pderi#1#2{\frac{\partial#1}{\partial#2}}%
\global\long\def\limk{\lim_{k\rightarrow\infty}}%
\global\long\def\trans{\textrm{T}}%
\global\long\def\onebf{\mathbf{1}}%
\global\long\def\oneds{\mathds{1}}%
\global\long\def\Bbb{\mathbb{B}}%
\global\long\def\hbf{\mathbf{h}}%
\global\long\def\zerobf{\mathbf{0}}%
\global\long\def\zero{\bm{0}}%

\global\long\def\Expe{\mathbb{E}}%
\global\long\def\inte{\operatornamewithlimits{int}}%
\global\long\def\dist{\operatornamewithlimits{dist}}%
\global\long\def\proj{\operatorname{Proj}}%
\global\long\def\argmin{\operatornamewithlimits{argmin}}%
\global\long\def\argmax{\operatornamewithlimits{argmax}}%
\global\long\def\tr{\operatornamewithlimits{tr}}%
\global\long\def\dis{\operatornamewithlimits{dist}}%
\global\long\def\sign{\operatornamewithlimits{sign}}%
\global\long\def\st{\operatornamewithlimits{s.t.}}%
\global\long\def\aleq{\overset{(a)}{\leq}}%
\global\long\def\aeq{\overset{(a)}{=}}%
\global\long\def\ageq{\overset{(a)}{\geq}}%
\global\long\def\bleq{\overset{(b)}{\leq}}%
\global\long\def\beq{\overset{(b)}{=}}%
\global\long\def\bgeq{\overset{(b)}{\geq}}%
\global\long\def\cleq{\overset{(c)}{\leq}}%
\global\long\def\ceq{\overset{(c)}{=}}%
\global\long\def\cgeq{\overset{(c)}{\geq}}%
\global\long\def\dleq{\overset{(d)}{\leq}}%
\global\long\def\deq{\overset{(d)}{=}}%
\global\long\def\dgeq{\overset{(d)}{\geq}}%
\global\long\def\eleq{\overset{(e)}{\leq}}%
\global\long\def\eeq{\overset{(e)}{=}}%
\global\long\def\egeq{\overset{(e)}{\geq}}%
\global\long\def\fleq{\overset{(f)}{\leq}}%
\global\long\def\feq{\overset{(f)}{=}}%
\global\long\def\fgeq{\overset{(f)}{\geq}}%
\global\long\def\gleq{\overset{(g)}{\leq}}%
\global\long\def\as{\textup{a.s.}}%
\global\long\def\ae{\textup{a.e.}}%
\global\long\def\Var{\operatornamewithlimits{Var}}%
\global\long\def\clip{\operatorname{clip}}%
\global\long\def\conv{\operatorname{conv}}%
\global\long\def\Cov{\operatornamewithlimits{Cov}}%
\global\long\def\raw{\rightarrow}%
\global\long\def\law{\leftarrow}%
\global\long\def\Raw{\Rightarrow}%
\global\long\def\Law{\Leftarrow}%
\global\long\def\vep{\varepsilon}%
\global\long\def\dom{\operatornamewithlimits{dom}}%
\global\long\def\tsum{{\textstyle {\sum}}}%
\global\long\def\Cbb{\mathbb{C}}%
\global\long\def\Ebb{\mathbb{E}}%
\global\long\def\Fbb{\mathbb{F}}%
\global\long\def\Nbb{\mathbb{N}}%
\global\long\def\Rbb{\mathbb{R}}%
\global\long\def\Sbb{\mathbb{S}}%
\global\long\def\extR{\widebar{\mathbb{R}}}%
\global\long\def\Pbb{\mathbb{P}}%
\global\long\def\Zbb{\mathbb{Z}}%
\global\long\def\Acal{\mathcal{A}}%
\global\long\def\Bcal{\mathcal{B}}%
\global\long\def\Ccal{\mathcal{C}}%
\global\long\def\Dcal{\mathcal{D}}%
\global\long\def\Ecal{\mathcal{E}}%
\global\long\def\Fcal{\mathcal{F}}%
\global\long\def\Gcal{\mathcal{G}}%
\global\long\def\Hcal{\mathcal{H}}%
\global\long\def\Ical{\mathcal{I}}%
\global\long\def\Jcal{\mathcal{J}}%
\global\long\def\Kcal{\mathcal{K}}%
\global\long\def\Lcal{\mathcal{L}}%
\global\long\def\Mcal{\mathcal{M}}%
\global\long\def\Ncal{\mathcal{N}}%
\global\long\def\Ocal{\mathcal{O}}%
\global\long\def\Pcal{\mathcal{P}}%
\global\long\def\Scal{\mathcal{S}}%
\global\long\def\Tcal{\mathcal{T}}%
\global\long\def\Xcal{\mathcal{X}}%
\global\long\def\Ycal{\mathcal{Y}}%
\global\long\def\Zcal{\mathcal{Z}}%
\global\long\def\i{i}%

\global\long\def\abf{\mathbf{a}}%
\global\long\def\bbf{\mathbf{b}}%
\global\long\def\cbf{\mathbf{c}}%
\global\long\def\dbf{\mathbf{d}}%
\global\long\def\ebf{\mathbf{e}}%
\global\long\def\fbf{\mathbf{f}}%
\global\long\def\gbf{\mathbf{g}}%
\global\long\def\hbf{\mathbf{h}}%
\global\long\def\ibf{\mathbf{i}}%
\global\long\def\jbf{\mathbf{j}}%
\global\long\def\kbf{\mathbf{k}}%
\global\long\def\lbf{\mathbf{l}}%
\global\long\def\mbf{\mathbf{m}}%
\global\long\def\nbf{\mathbf{n}}%
\global\long\def\obf{\mathbf{o}}%
\global\long\def\pbf{\mathbf{p}}%
\global\long\def\qbf{\mathbf{q}}%
\global\long\def\rbf{\mathbf{r}}%
\global\long\def\sbf{\mathbf{s}}%
\global\long\def\tbf{\mathbf{t}}%
\global\long\def\ubf{\mathbf{u}}%
\global\long\def\vbf{\mathbf{v}}%
\global\long\def\wbf{\mathbf{w}}%
\global\long\def\xbf{\mathbf{x}}%
\global\long\def\ybf{\mathbf{y}}%
\global\long\def\zbf{\mathbf{z}}%
\global\long\def\lambf{\bm{\lambda}}%
\global\long\def\alphabf{\bm{\alpha}}%
\global\long\def\sigmabf{\bm{\sigma}}%
\global\long\def\Sigmabf{\bm{\Sigma}}%
\global\long\def\thetabf{\bm{\theta}}%
\global\long\def\deltabf{\bm{\delta}}%
\global\long\def\Deltabf{\bm{\Delta}}%
\global\long\def\Omegabf{\bm{\Omega}}%
\global\long\def\Lambf{\bm{\Lambda}}%
\global\long\def\pibf{\bm{\pi}}%
\global\long\def\Wbf{\mathbf{W}}%
\global\long\def\Abf{\mathbf{A}}%
\global\long\def\Jbf{\mathbf{J}}%
\global\long\def\Ubf{\mathbf{U}}%
\global\long\def\Vbf{\mathbf{V}}%
\global\long\def\Pbf{\mathbf{P}}%
\global\long\def\Ibf{\mathbf{I}}%
\global\long\def\Ebf{\mathbf{E}}%
\global\long\def\Mbf{\mathbf{M}}%
\global\long\def\Nbf{\mathbf{N}}%
\global\long\def\Dbf{\mathbf{D}}%
\global\long\def\Qbf{\mathbf{Q}}%
\global\long\def\Lbf{\mathbf{L}}%
\global\long\def\Pbf{\mathbf{P}}%
\global\long\def\Xbf{\mathbf{X}}%
\global\long\def\Ybf{\mathbf{Y}}%
\global\long\def\Bbf{\mathbf{B}}%
\global\long\def\Tbf{\mathbf{T}}%
\global\long\def\Hbf{\mathbf{H}}%
\global\long\def\Sbf{\mathbf{S}}%


\global\long\def\abm{\bm{a}}%
\global\long\def\bbm{\bm{b}}%
\global\long\def\cbm{\bm{c}}%
\global\long\def\dbm{\bm{d}}%
\global\long\def\ebm{\bm{e}}%
\global\long\def\fbm{\bm{f}}%
\global\long\def\gbm{\bm{g}}%
\global\long\def\hbm{\bm{h}}%
\global\long\def\pbm{\bm{p}}%
\global\long\def\qbm{\bm{q}}%
\global\long\def\rbm{\bm{r}}%
\global\long\def\sbm{\bm{s}}%
\global\long\def\tbm{\bm{t}}%
\global\long\def\ubm{\bm{u}}%
\global\long\def\vbm{\bm{v}}%
\global\long\def\wbm{\bm{w}}%
\global\long\def\xbm{\bm{x}}%
\global\long\def\ybm{\bm{y}}%
\global\long\def\zbm{\bm{z}}%
\global\long\def\Abm{\bm{A}}%
\global\long\def\Bbm{\bm{B}}%
\global\long\def\Cbm{\bm{C}}%
\global\long\def\Dbm{\bm{D}}%
\global\long\def\Ebm{\bm{E}}%
\global\long\def\Fbm{\bm{F}}%
\global\long\def\Gbm{\bm{G}}%
\global\long\def\Hbm{\bm{H}}%
\global\long\def\Ibm{\bm{I}}%
\global\long\def\Jbm{\bm{J}}%
\global\long\def\Lbm{\bm{L}}%
\global\long\def\Obm{\bm{O}}%
\global\long\def\Pbm{\bm{P}}%
\global\long\def\Qbm{\bm{Q}}%
\global\long\def\Rbm{\bm{R}}%
\global\long\def\Ubm{\bm{U}}%
\global\long\def\Vbm{\bm{V}}%
\global\long\def\Wbm{\bm{W}}%
\global\long\def\Xbm{\bm{X}}%
\global\long\def\Ybm{\bm{Y}}%
\global\long\def\Zbm{\bm{Z}}%
\global\long\def\lambm{\bm{\lambda}}%
\global\long\def\alphabm{\bm{\alpha}}%
\global\long\def\albm{\bm{\alpha}}%
\global\long\def\taubm{\bm{\tau}}%
\global\long\def\mubm{\bm{\mu}}%
\global\long\def\inftybm{\bm{\infty}}%

\maketitle

\begin{abstract}
    Motivated by large-scale applications, there is a recent trend of research on using first-order methods for solving LP. Among them, PDLP, which is based on a primal-dual hybrid gradient (PDHG) algorithm, may be the most promising one. In this paper, we present a geometric viewpoint on the behavior of PDHG for LP. We demonstrate that PDHG iterates exhibit a spiral pattern with a closed-form solution when the variable basis remains unchanged. This spiral pattern consists of two orthogonal components: rotation and forward movement, where rotation improves primal and dual feasibility, while forward movement advances the duality gap. We also characterize the different situations in which basis change events occur. Inspired by the spiral behavior of PDHG, we design a new crossover algorithm to obtain a vertex solution from any optimal LP solution. This approach differs from traditional simplex-based crossover methods. Our numerical experiments demonstrate the effectiveness of the proposed algorithm, showcasing its potential as an alternative option for crossover.
\end{abstract}

\section{Introduction} \label{sec:intro}
Linear programming (LP), which refers to optimizing a linear function over a polyhedron, is one of the most fundamental classes of optimization problems. It has been extensively studied in both academia and industry and has been applied to solve real-world applications in transportation, scheduling, economy, resource allocation, etc.

Two classic methods for solving LP are simplex methods and interior point methods (IPMs). The simplex methods, proposed by Dantzig in the late 1940s \citep{dantzig1951maximization}, start from a vertex and pivot between vertices to monotonically improve the objective \citep{dantzig1963linear}. IPMs \citep{karmarkar1984new}, on the other hand, utilize a self-concordant barrier function to ensure that solutions remain inside the polyhedron, approaching an optimal solution by following a central path  \citep{renegar1988polynomial}.

Motivated by applications of large-scale LP, there has been a recent trend towards developing first-order methods (FOMs) for LP, such as PDLP \citep{applegate2021practical,lu2023cupdlp,lu2023cupdlpc}, SCS \citep{ocpb:16, odonoghue:21}, and ABIP \citep{lin2021admm, deng2024enhanced}.  The key advantage of FOMs is their low iteration cost, primarily involving matrix-vector multiplications, which avoids the need for solving linear equations as required in the simplex methods and IPMs. This advantage makes them particularly well-suited for GPU implementation. Among these, PDLP stands out as the most promising, with its GPU implementation demonstrating numerical performance on par with state-of-the-art LP solvers on standard benchmark sets and demonstrating superior performance on large-scale instances~\citep{lu2023cupdlp,lu2023cupdlpc}. PDLP is based on the primal-dual hybrid gradient (PDHG) method, enhanced by various numerical improvements~\citep{applegate2021practical}. Variants of PDLP have been implemented in both commercial and open-source optimization solvers, such as COPT \citep{copt}, Xpress \citep{xpress2014fico}, Google OR-Tools \citep{ortools}, HiGHS \citep{huangfu2018parallelizing}, and NVIDIA cuOpt \citep{cuopt}.

While the behaviors of simplex methods and IPMs are well-studied---simplex methods move over vertices, and IPMs follow the central path---the convergence behaviors of PDLP and other FOMs toward an optimal solution are less well-understood. For example, we plot the 2-dimensional primal trajectories \footnote{For the LP $\min_{\xbf\in\Rbb_+^2}\{ \cbf^\top\xbf : \Abf\xbf \leq \bbf \}$, we reformulate it as $\min_{(\xbf,\wbf)\in\Rbb_+^2\times\Rbb_+^m}\{ \cbf^\top\xbf : \Abf\xbf + \wbf = \bbf \}$ and plot the trajectories of $\xbf$ from different methods solving the reformulated problem.} of different LP methods in Figure \ref{fig:lp-methods}. PDHG seems to progress in a more chaotic and winding manner compared to other classic methods.
\begin{figure}[!ht]
    \centering
    \includegraphics[width=0.5\linewidth]{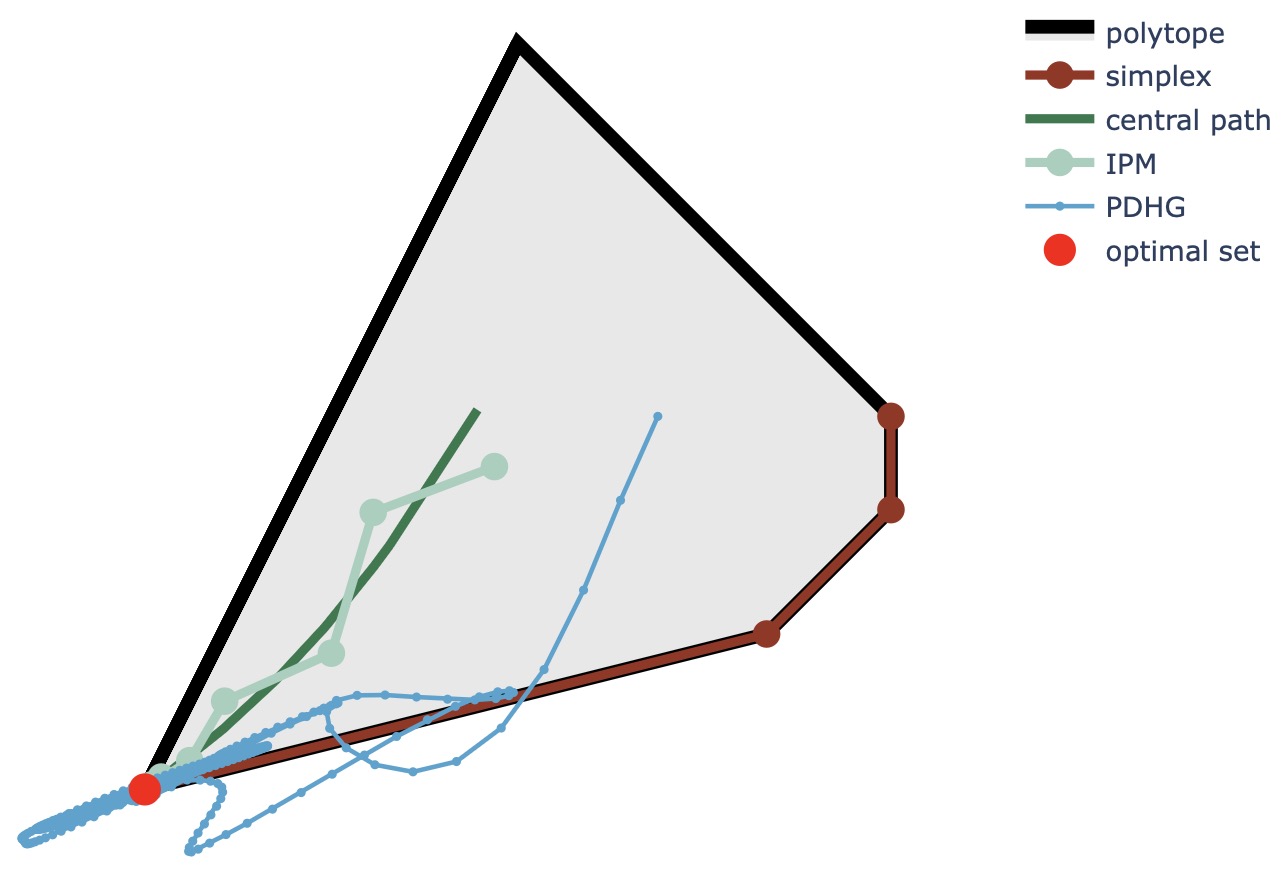}
    \caption{Primal trajectories of different LP methods.}
    \label{fig:lp-methods}
\end{figure}

The first contribution of this paper is to present a geometric viewpoint on the convergence behavior of the PDHG algorithm when applied to LP problems (Section \ref{sec:pdlp}). Similar to simplex methods, we can define basic and non-basic variables for PDHG iterates. We define a phase as a series of consecutive PDHG iterates that share the same set of basic variables. Within each phase, we demonstrate that PDHG behaves like a spiral, consisting of two orthogonal components: rotation and forward movement. The rotation improves primal and dual feasibility, while the forward movement advances the duality gap. We also characterize the different situations in which basis change events occur.

A fundamental distinction among different LP methods is that simplex methods identify an optimal vertex (i.e., an optimal basis), whereas IPMs and FOMs may find an interior point within the optimal solution set. Specifically, IPMs converge to the analytic center of the optimal solution set, while FOMs can reach any optimal solution depending on the initial solution. On the other hand, vertex solutions obtained via simplex methods are generally more precise, sparse, and informative. For example, vertex solutions are particularly useful for LP subroutines in branch-and-bound MIP solvers, as they enhance integrality and also benefit warm starts. Consequently, the process of ``crossover,'' which involves deriving an optimal vertex from any arbitrary optimal solution, has been extensively studied for IPMs. Most modern LP solvers include a crossover step following IPM solving. The algorithmic framework for contemporary crossover was introduced by \citet{megiddo1991finding} and has been further refined by \citet{bixby1994recovering} and \citet{andersen1996combining}. These approaches, which rely on simplex methods to adjust variables to their bounds, are referred to as simplex-based crossover.

Inspired by the spiral behavior of PDHG, our second contribution is the design of a new crossover method to generate a vertex solution from a PDLP solution. This crossover leverages PDLP itself and its spiral axes (referred to as spiral rays in this paper), eliminating the need for any simplex steps. We present a numerical study that demonstrates the effectiveness of our proposed crossover algorithm, which may provide a practical alternative to simplex-based crossover methods.

\paragraph{Organization of the paper.}

The paper is organized as follows. Section~\ref{sec:rel-work} reviews related studies in PDHG and crossover approaches. Section~\ref{sec:pdlp} presents the spiral behavior of PDHG for LP. Section~\ref{sec:cross} proposes a new crossover algorithm inspired by the spiral structure of PDHG without the need for simplex methods. Section~\ref{sec:exp} presents a numerical study of our crossover approach.

\paragraph{Notations.}

We use bold letters for vectors and matrices. Let $\xbf_j$ be the $j$-th component of vector $\xbf$, and $\xbf_\Jcal$ be the vector formed by components $\xbf_{j}$ for $j\in\Jcal$. Let $\Abf_{j}$ be the $j$-th column of matrix $\Abf$, and $\Abf_\Jcal$ be the submatrix formed by columns $\Abf_{j}$ for $j\in\Jcal$. Let $\Abf_{i,j}$ be the entry in the $i$-th row and $j$-th column of matrix $\Abf$, and $\Abf_{\Ical, \Jcal}$ be the submatrix formed by entries $\Abf_{i,j}$ for $i\in\Ical, j\in\Jcal$. We use $\xbf \geq \ybf$ to express the element-wise inequality $\xbf_{i} \geq \ybf_{i}$ for all $i$. Let $\zerobf$ and $\onebf$ be a vector or matrix of zeros and ones, respectively. Let $\Ibf$ be the identity matrix. The dimension of a vector or a matrix will be unspecified whenever it is clear from the context. Let $\Rbb^n$ denote the $n$-dimensional Euclidean space and $\Rbb_+^n = \{\xbf\in\Rbb^n : \xbf \geq \zerobf\}$. The vector norm $\|\cdot\|_\ell$ is $\ell$-norm. The $\Mbf$-norm $\|\vbf\|_\Mbf = \sqrt{\vbf^\top \Mbf \vbf}$ where $\Mbf$ is a positive definite matrix. The matrix norm $\|\cdot\|_2$ is the spectral norm. $\xbf^{(k)}$ means the $k$-th point $\xbf$, while $\Pbf^k$ without the parenthesis means $\Pbf$ to the power of $k$. $\Abf^\dagger$ is the Moore–Penrose inverse of $\Abf$. The operator $\proj_{\Omegabf}(\xbf)$ projects $\xbf$ onto the set $\Omegabf$ using 2-norm.
The range of an operator $\Tbf$ is denoted as $\text{range}(\Tbf)$ while the closure of a set $S$ is denoted as $\text{cl}(S)$.

\subsection{Related Work} \label{sec:rel-work}
\paragraph{PDLP and related.}
PDHG was originally proposed in~\cite{zhu2008efficient, esser2010general}, and further
studied in \citet{chambolle2011first} for image processing applications. \citet{applegate2021practical} developed a restarted PDHG algorithm for solving linear programs (LPs), which, together with a few practical enhancements, leads to the PDLP solver. Compared with other popular FOM solvers such as SCS and ABIP, there is no linear system to solve in PDLP, making it a more suitable choice for large-scale LPs. cuPDLP, the GPU implementation of PDLP, demonstrated strong numerical performance on par with commercial LP solvers, and has attracted attention from both the academic and solver industry \citep{lu2023cupdlp, lu2023cupdlpc}. Recently, additional numerical enhancements have been proposed to speed up the algorithm further. \citet{xiong2024role} pointed out that the central-path-based scaling can markedly improve the convergence rate of PDHG. \citet{lu2024restarted} embedded Halpern iteration \citep{halpern1967fixed} in PDLP, achieving accelerated theoretical guarantees and improved computational results.

The practical success of PDLP motivates a stream of theoretical analysis. \citet{lu2022infimal} showed PDHG has a linear convergence rate when solving LP. \citet{applegate2023faster} revealed that an accelerated linear convergence rate can be achieved with restarts, and such an accelerated linear rate matches the lower bound. \citet{applegate2024infeasibility} illustrated that for infeasible/unbounded LP, PDHG iterates diverge towards the direction of infeasibility/unboundedness certificate; thus, one can detect the infeasibility/unboundedness of LP using PDHG without additional effort. \citet{lu2024geometry} discovered that PDHG has a two-stage behavior when solving LP. In the first stage, PDHG uses finite iterations to verify the optimal active variables; in the second stage, PDHG solves a homogeneous linear inequality system with a significantly improved linear rate. This explains the slow behaviors of PDHG when the LP is near-degeneracy. \citet{xiong2024role} introduced level-set geometry that characterizes the difficulty of PDHG for LP, which shows that PDHG converges faster for LP with a regular non-flat level-set around the optimal solution set. 

For more general fixed point problems, trajectories of various FOMs were studied in \citet{poon2019trajectory, poon2020geometry}, which showed that eventually (after a finite number of iterations), these algorithms follow a straight line or a spiral structure if the problem is non-degenerate. Unfortunately, this non-degeneracy assumption is generally never satisfied for real-world LP instances.

\paragraph{Crossover.}
Crossover, or purification, refers to the process that moves a feasible or optimal solution to a vertex solution, also known as the extreme point, basic feasible solution, or corner solution. Purification methods \citep{charnes1963opposite, charnes1965extreme} were proposed even earlier than those non-vertex LP methods, including the ellipsoid methods \citep{khachiyan1979polynomial} and IPMs \citep{karmarkar1984new}.

Given a basis $B$ and a feasible solution $\xbf$, the purification algorithm \citep{charnes1963opposite, charnes1965extreme} generates a null space vector step $\alphabf$ same as the direction in simplex methods and then tries to move $\xbf$ along it without worsening the objective or update $B$ for feasible directions, which will finally return a feasible vertex with the objective value no worse in finite steps. Given a basis $B$ and a strictly interior feasible point, \citet{kortanek1988new} improved Charnes's method by drawing the vertex back to the interior if it is not optimal. An optimal extreme point will be reached in a finite number of steps. If the feasible point is not entirely in the interior, \citet{kortanek1988new} also provided another purification method that considers dual feasibility. Combining Kortanek's second method with a proper cleanup will return an optimal basis. Given a dual optimal solution $\pibf$, \citet{amor2006recovering} constructed an auxiliary dual LP by adding box constraints around $\pibf$ to the original dual problem. Dualizing back to obtain an auxiliary primal LP, Amor claimed that the auxiliary primal LP can be efficiently solved by simplex methods, and then an optimal vertex can be easily recovered.

Starting from an optimal primal-dual solution pair, \citet{megiddo1991finding} designed a strongly polynomial crossover approach to an optimal vertex. Megiddo also pointed out that given only the optimal primal or dual solution, solving a general LP is theoretically as hard as solving it from scratch. Megiddo's crossover method lays down the algorithmic and theoretical foundation for modern crossover algorithms in modern LP solvers. Our randomized crossover modifies Megiddo's crossover framework, which will be introduced in Section~\ref{sec:cross}.

With the boom of various IPMs and MIP applications, crossover gradually plays a significant role in the LP solvers. The key task becomes refining IPM solutions to optimal vertices. \citet{mehrotra1991finding} added a controlled random cost perturbation to eliminate dual degeneracy at the sacrifice of losing only a little optimality. Especially for non-degenerate LPs, an indicator \citep{tapia1991optimal} can be calculated from the interior solution to identify the basis. \citet{bixby1994recovering} implemented a modified simplex method, which accepts an interior feasible point as a super-basic solution. Then similar to a normal simplex method, Bixby's algorithm uses the primal (or dual) ratio test to push primal (or dual) variables to bound if possible or pivots it into (or out of) the bound. \citet{andersen1996combining} used the strictly complementary property of IPM solutions to define a perturbed LP problem. Andersen's method obtains a basis and then recovers feasibility by pivoting and taking a weighted average with the given IPM solution. \citet{ge2025interior} also proposed a perturbation crossover method that combines variable fixing and random cost perturbation. They proved that as long as the perturbation is sufficiently small, their crossover approach will not bring objective loss.

Our crossover modifies Megiddo's framework and adopts random perturbation, together with PDLP and ordinary least squares (OLS) solver as subroutines. More details will be discussed in Section~\ref{sec:cross}.

\section{Spiral Behavior of PDHG for LP} \label{sec:pdlp}
In this section, we present a geometric viewpoint on the spiral behavior of PDHG on LPs. For ease of presentation, we consider PDHG based on the standard form of LP, while most of the results can be naturally extended to other formulations. More formally, we consider
\begin{equation}
    \begin{aligned}
        \min\  & \cbf^\top \xbf \\
        \st\   & \Abf\xbf = \bbf \\
               & \xbf \geq \zerobf,
    \end{aligned}
    \label{prob:lp}
\end{equation}
where $\Abf\in\Rbb^{m\times n}, \cbf\in\Rbb^n, \bbf\in\Rbb^m$. The dual problem associated with \eqref{prob:lp} is
\begin{equation}
    \begin{aligned}
        \max\  & \bbf^\top \ybf \\
        \st\   & \Abf^\top \ybf \leq \cbf.
    \end{aligned}
    \label{prob:d-lp}
\end{equation}

\subsection{Preliminaries}
\subsubsection{PDHG for LP}

PDHG solves the following saddle-point formulation of \eqref{prob:lp} and \eqref{prob:d-lp}
\begin{equation*}
    \min_{\xbf\geq\zerobf}\max_{\ybf}\ \Lcal(\xbf,\ybf) = \cbf^\top \xbf - \ybf^\top \Abf\xbf + \bbf^\top \ybf,
\end{equation*}
and the detail is presented in Algorithm \ref{alg:pdhg-lp}. Without loss of generality, we assume the primal and the dual step sizes are the same throughout the paper (otherwise, we can rescale the primal or dual variables correspondingly, see \citet{applegate2023faster}).

\begin{algorithm}[!ht]
    \caption{Primal-Dual Hybrid Gradient for LP} \label{alg:pdhg-lp}
    \KwData{The standard LP problem with $\cbf, \Abf, \bbf$.}
    \KwIn{Initial point $(\xbf^{(0)}, \ybf^{(0)}) \in \Rbb^n_+ \times \Rbb^m$, step size $\eta \in (0, \frac{1}{\|\Abf\|_2})$, tolerance $\epsilon \in (0, +\infty)$.}
    \KwOut{The $\epsilon$-optimal solution $(\xbf^{(k)}, \ybf^{(k)})$.}
    $k\leftarrow0$ \;
    \While{not converge within $\epsilon$ accuracy}{
        $\xbf^{(k+1)} \leftarrow \proj_{\Rbb^n_+} \left(\xbf^{(k)} - \eta(\cbf - \Abf^\top \ybf^{(k)}) \right)$ \;
        $\ybf^{(k+1)} \leftarrow \ybf^{(k)} + \eta (\bbf - \Abf(2\xbf^{(k+1)}-\xbf^{(k)}))$ \;
        $k \leftarrow k+1$ \;
    }
\end{algorithm}
 
PDHG can be viewed as conducting projected primal gradient descent and projected dual gradient ascent with extrapolation in turn, and for primal-dual feasible LPs, it is guaranteed to converge to an optimal solution (see, for example, \citet{lu2022infimal,lu2023unified} for intuitions of this extrapolation step). Overall, updates in PDHG are simple, with only matrix-vector multiplication as the bottleneck, while converging to an optimal primal-dual pair at a linear rate.

PDHG, together with other FOMs, has been observed to spiral in practice. For example, \citet{applegate2023faster} plotted a spiral trajectory of PDHG for a 2-dimensional bilinear saddle-point problem. \citet{deng2024enhanced} visualized the first 3 dimensions of ABIP iterations when solving an LP from NETLIB \citep{gay1985electronic}, which also forms an oscillation. These observations encourage FOMs to restart from the average among previous steps, fast approaching the spiral center. From the theoretical aspect, restart accelerates PDHG to converge at a faster linear rate~\citep{applegate2023faster}.

\subsubsection{Infimal Displacement Vector}
For infeasible LPs, PDHG iterates do not converge but rather diverge along a vector called the infimal displacement vector (IDV) of the PDHG operator~\citep{applegate2024infeasibility}. Formally, we denote $\Tbf$ as the PDHG operator, i.e.,
\begin{equation*}
    \Tbf(\xbf^{(k)}, \ybf^{(k)})
    =
    \begin{bmatrix}
        \proj_{\Rbb^n_+} \left(\xbf^{(k)} - \eta(\cbf - \Abf^\top \ybf^{(k)}) \right) \\
        \ybf^{(k)} + \eta (\bbf - \Abf(2\proj_{\Rbb^n_+} \left(\xbf^{(k)} - \eta(\cbf - \Abf^\top \ybf^{(k)}) \right)-\xbf^{(k)}))
    \end{bmatrix}.
\end{equation*}

Its IDV is defined as the minimum perturbation we should subtract from $\Tbf$ to ensure it has a fixed point:
\begin{definition}[Infimal displacement vector \citep{pazy1971asymptotic}] \label{def:idv}
    The unique infimal displacement vector for PDHG with the step size $\eta \in (0, \frac{1}{\|\Abf\|_2})$ is defined as
    \begin{equation*}
        \begin{aligned}
            \vbf_\Tbf = \argmin \  & \frac{1}{2} \|\vbf\|^2_\Mbf                      \\
            \st                 \  & \vbf \in \text{cl}(\text{range}(\Tbf - \Ibf))
        \end{aligned}
    \end{equation*}
    where $\Tbf$ is the operator corresponding to PDHG.
\end{definition}

If $\eta \in (0, \frac{1}{\|\Abf\|_2})$, $\Tbf$ is firmly non-expansive with respect to the norm $\|\cdot\|_\Mbf$~\citep{applegate2024infeasibility}, where the positive definite matrix $\Mbf = 
\begin{bmatrix}
    \Ibf_n & \eta \Abf^\top \\
    \eta \Abf & \Ibf_m
\end{bmatrix}$.
For firmly non-expansive operators, one can recover the IDV using the difference of iterates and/or scaled iterates:

\begin{prop}[\citep{pazy1971asymptotic,baillon1978asymptotic}] \label{prop:idv}
    Let $\Tbf$ be a non-expansive operator and $\zbf$ be the initial point. Then the infimal displacement vector of the operator $\Tbf$ satisfies
    \begin{equation*}
        \lim_{k\to\infty} \frac{\Tbf^k\zbf}{k} = \vbf.
    \end{equation*}
    If further $\Tbf$ is firmly non-expansive, then
    \begin{equation*}
        \lim_{k\to\infty} \Tbf^{k+1}\zbf - \Tbf^k\zbf = \vbf.
    \end{equation*}
\end{prop}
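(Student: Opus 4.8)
The plan is to treat this as the standard asymptotic-regularity statement for a (firmly) non-expansive operator, carried out in the Hilbert-space structure induced by $\Mbf$, in which (by the discussion preceding Definition~\ref{def:idv}) $\Tbf$ is non-expansive and, for the stated step size, firmly non-expansive. I would write $\zbf^{(k)}=\Tbf^k\zbf$, $\dbf^{(k)}=\Tbf^{k+1}\zbf-\Tbf^k\zbf=(\Tbf-\Ibf)\zbf^{(k)}$, and $R=\text{cl}(\text{range}(\Tbf-\Ibf))$, so that $\vbf_\Tbf$ is exactly the $\Mbf$-projection of the origin onto $R$. The first point to establish is that $R$ is convex: a short computation gives $\inner{(\Ibf-\Tbf)\xbf-(\Ibf-\Tbf)\ybf}{\xbf-\ybf}_\Mbf \ge \norm{\xbf-\ybf}_\Mbf^2-\norm{\Tbf\xbf-\Tbf\ybf}_\Mbf\norm{\xbf-\ybf}_\Mbf\ge 0$, so $\Ibf-\Tbf$ is $\Mbf$-monotone, and being single-valued, Lipschitz and everywhere defined it is maximal monotone; hence the closure of its range is convex by the classical theory of monotone operators, and $R$, its reflection through the origin, is convex as well.

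For the first limit, I would observe that each increment $\dbf^{(k)}$ lies in $\text{range}(\Tbf-\Ibf)\subseteq R$, so the Cesàro averages $\tfrac1n(\zbf^{(n)}-\zbf)=\tfrac1n\sum_{k=0}^{n-1}\dbf^{(k)}$ remain in the closed convex set $R$. Non-expansiveness makes $a_n:=\norm{\Tbf^n\zbf-\zbf}_\Mbf$ subadditive, so $a_n/n$ converges to $\ell:=\inf_m a_m/m$, and the elementary bound $|a_n(\xbf)-a_n(\ybf)|\le 2\norm{\xbf-\ybf}_\Mbf$ shows $\ell$ is independent of the starting point. Testing against any $\wbf=(\Tbf-\Ibf)\ubf\in\text{range}(\Tbf-\Ibf)$ and using $\norm{\Tbf^n\ubf-\ubf}_\Mbf\le\sum_{k=1}^n\norm{\Tbf^k\ubf-\Tbf^{k-1}\ubf}_\Mbf\le n\norm{\wbf}_\Mbf$ gives $\ell\le\norm{\wbf}_\Mbf$, hence $\ell\le\norm{\vbf_\Tbf}_\Mbf$ after taking the infimum over $R$; conversely any cluster point of $\tfrac1n(\zbf^{(n)}-\zbf)$ lies in $R$ and has $\Mbf$-norm $\ell$, forcing $\ell\ge\norm{\vbf_\Tbf}_\Mbf$. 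Therefore $\ell=\norm{\vbf_\Tbf}_\Mbf$, every cluster point is a norm-minimal element of the convex set $R$, and by uniqueness of the $\Mbf$-projection it equals $\vbf_\Tbf$; a bounded sequence with a single cluster point converges, giving $\limk\Tbf^k\zbf/k=\vbf_\Tbf$.

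For the second limit I would first note that $\norm{\dbf^{(k)}}_\Mbf$ is non-increasing (non-expansiveness of $\Tbf$ applied to $\zbf^{(k+1)},\zbf^{(k)}$), hence converges to some $d_\infty\ge\norm{\vbf_\Tbf}_\Mbf$, the inequality coming from the Cesàro limit just obtained. The crux is the reverse bound $d_\infty\le\norm{\vbf_\Tbf}_\Mbf$, and this is where firm non-expansiveness is genuinely used: applying the firm-non-expansiveness identity to two orbits $\Tbf^k\xbf,\Tbf^k\ybf$ and telescoping yields $\sum_k\norm{(\Tbf-\Ibf)\Tbf^k\xbf-(\Tbf-\Ibf)\Tbf^k\ybf}_\Mbf^2\le\norm{\xbf-\ybf}_\Mbf^2$, so the two increment sequences differ by a null sequence and $d_\infty$ does not depend on the starting point. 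Since $\vbf_\Tbf\in R$, I would pick $\ubf_\epsilon$ with $\norm{(\Tbf-\Ibf)\ubf_\epsilon-\vbf_\Tbf}_\Mbf\le\epsilon$, iterate from $\ubf_\epsilon$, and use monotonicity of $k\mapsto\norm{(\Tbf-\Ibf)\Tbf^k\ubf_\epsilon}_\Mbf$ together with orbit-independence to conclude $d_\infty\le\norm{(\Tbf-\Ibf)\ubf_\epsilon}_\Mbf\le\norm{\vbf_\Tbf}_\Mbf+\epsilon$; letting $\epsilon\downarrow 0$ gives $d_\infty=\norm{\vbf_\Tbf}_\Mbf$. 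Finally, the supporting-hyperplane characterization of the projection gives $\inner{\dbf^{(k)}}{\vbf_\Tbf}_\Mbf\ge\norm{\vbf_\Tbf}_\Mbf^2$, so $\norm{\dbf^{(k)}-\vbf_\Tbf}_\Mbf^2\le\norm{\dbf^{(k)}}_\Mbf^2-\norm{\vbf_\Tbf}_\Mbf^2\to d_\infty^2-\norm{\vbf_\Tbf}_\Mbf^2=0$, which is the claim $\Tbf^{k+1}\zbf-\Tbf^k\zbf\to\vbf_\Tbf$.

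The step I expect to be the main obstacle is the identity $d_\infty=\norm{\vbf_\Tbf}_\Mbf$ in the firmly non-expansive case: for a merely non-expansive operator (say a rotation composed with a translation) one has $\norm{\dbf^{(k)}}_\Mbf$ constant and in general strictly larger than $\norm{\vbf_\Tbf}_\Mbf$, so the argument must exploit the extra contraction — here through the $\Mbf$-square-summability of the gap between the increment sequences of two orbits, which is precisely what makes $d_\infty$ orbit-independent and hence forces it down to the norm of the infimal displacement vector. Everything else is bookkeeping, the only other non-elementary input being the convexity of $R$; in infinite dimensions the finite-dimensional compactness used for cluster points in the first limit would be replaced by weak convergence together with the demiclosedness of $\Ibf-\Tbf$ (cf. \citet{pazy1971asymptotic,baillon1978asymptotic}), but in the present LP setting everything is finite-dimensional.
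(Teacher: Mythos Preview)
The paper does not supply its own proof of this proposition; it is stated with attribution to \citet{pazy1971asymptotic} and \citet{baillon1978asymptotic} and used as a black box for the subsequent discussion of infeasibility detection. Your proposal is a correct and self-contained reconstruction of exactly those classical arguments, transported to the $\Mbf$-inner product in which $\Tbf$ is (firmly) non-expansive: convexity of $\text{cl}(\text{range}(\Tbf-\Ibf))$ via maximal monotonicity of $\Ibf-\Tbf$, the Fekete subadditivity argument for $\Tbf^k\zbf/k\to\vbf_\Tbf$, and the Baillon--Bruck square-summability of orbit-increment differences to obtain orbit-independence of $d_\infty$ and hence $d_\infty=\norm{\vbf_\Tbf}_\Mbf$ in the firmly non-expansive case. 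There is no gap, and your diagnosis that firm non-expansiveness is essential for the second limit (with the rotation-plus-translation counterexample) is exactly right.
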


Proposition \ref{prop:idv} implies that PDHG will diverge along $\vbf \neq \zerobf$ if the LP is primal or dual infeasible, and will converge ($\vbf = \zerobf$) if the LP is primal-dual feasible.

\subsection{PDHG Spiral Behavior}
In this part, we formally present the PDHG spiral behavior when solving LPs. At a high level, PDHG identifies different bases from phase to phase. During each phase, the iterates follow a spiral ray, where the ray direction and the spiral center have closed-form formulas.

First, we define the basis of PDHG iterates, which characterizes the active primal variables. This definition is very similar to the basis defined for the simplex method. The fundamental difference is that the corresponding columns of the basis in the constraint matrix may not form a nonsingular square matrix.

\begin{definition}[PDHG basis] \label{def:pdlp-basis}
    For a PDHG iterate $(\xbf,\ybf)$, the basis $B$ and the non-basis $N$ form a partition of the primal variables, defined as
    \begin{equation*}
        \begin{aligned}
            B & = \{i: \xbf_i > 0\} \cup \{i: \xbf_i = 0, \cbf_i - \Abf_i^\top \ybf \leq 0\} \\
            N & = \{i: \xbf_i = 0, \cbf_i - \Abf_i^\top \ybf > 0\}.
        \end{aligned}
    \end{equation*}
    We further call $\xbf_i$ a basic variable if $i\in B$ and $\xbf_i$ a non-basic variable if $i\in N$.
\end{definition}

Then according to changes in the PDHG basis, we can divide the PDHG solving process into multiple phases.
\begin{definition}[Phase and basis change event] \label{def:phase-basis-change}
    We call a sequence of consecutive PDHG iterations form a phase if they share the same PDHG basis. A basis change event occurs when two sequential PDHG iterations have different PDHG bases.
\end{definition}

Within one phase, the spiral ray---including the spiral center and the ray direction, which are key to describing the PDHG spiral behavior---is defined as follows.

\begin{definition}[Spiral ray] \label{def:spiral-ray-center}
    A spiral behavior along the spiral ray is a trajectory $\{\zbf^{(k)}\}$ in the form of
    \begin{equation*}
        \zbf^{(k)} - \zbf_\vbf = \Pbf^k (\zbf^{(0)} - \zbf_\vbf) + k\vbf,
    \end{equation*}
    where the eigenvalues of the diagonalizable $\Pbf$ consist of contingent one and non-real complex eigenvalues with the modulus strictly less than one, $\zbf^{(0)} - \zbf_\vbf$ is orthogonal to all eigenvectors associated with the eigenvalue of one, and $\vbf^\top \Pbf^k (\zbf^{(0)} - \zbf_\vbf) = \zerobf$ for all $k$. We refer to the vector $\vbf$ as the ray direction, the vector $\zbf_\vbf$ as the spiral center, and together $\zbf_\vbf + \theta \vbf, \theta \geq 0$ as the spiral ray.
\end{definition}

The $\Pbf^k (\zbf^{(0)} - \zbf_\vbf)$ captures the rotation with the radius decreasing linearly along the direction corresponding to the complex eigenvalues, while the $k\vbf$ captures the forward movement. The orthogonality between rotation and forward movement allows us to analyze their effects independently.

To gain intuition about the above definition of PDHG basis, phase, basis change event, and spiral ray, we illustrate them with the following example:
\begin{example}[PDHG spiral and basis change events] \label{exp:bases}
    We apply PDHG to solve a toy LP with two primal variables and one constraint.
    \begin{equation}\label{eq:toy}
        \begin{aligned}
            \min \ & 2\xbf_1 + 3 \xbf_2 \\
            \st  \ & \xbf_1 + 2\xbf_2 = 1 \\
                   & \xbf_1, \xbf_2 \geq 0.
        \end{aligned}
    \end{equation}
    PDHG uses the step size $\eta = 0.05$ and relative tolerance $\epsilon=1e-8$, and starts from $\xbf^{(0)} = (1, 2), \ybf^{(0)} = 2$. It takes 3235 steps to the unique optimal solution $\xbf^* = (0, 0.5), \ybf^* = 1.5$.
\end{example}

\begin{figure}[!ht]
    \centering
    \subcaptionbox{Four phases of PDHG on the toy LP \eqref{eq:toy}. \label{fig:pdlp-spiral-demo}}{
        \includegraphics[width=0.5\linewidth]{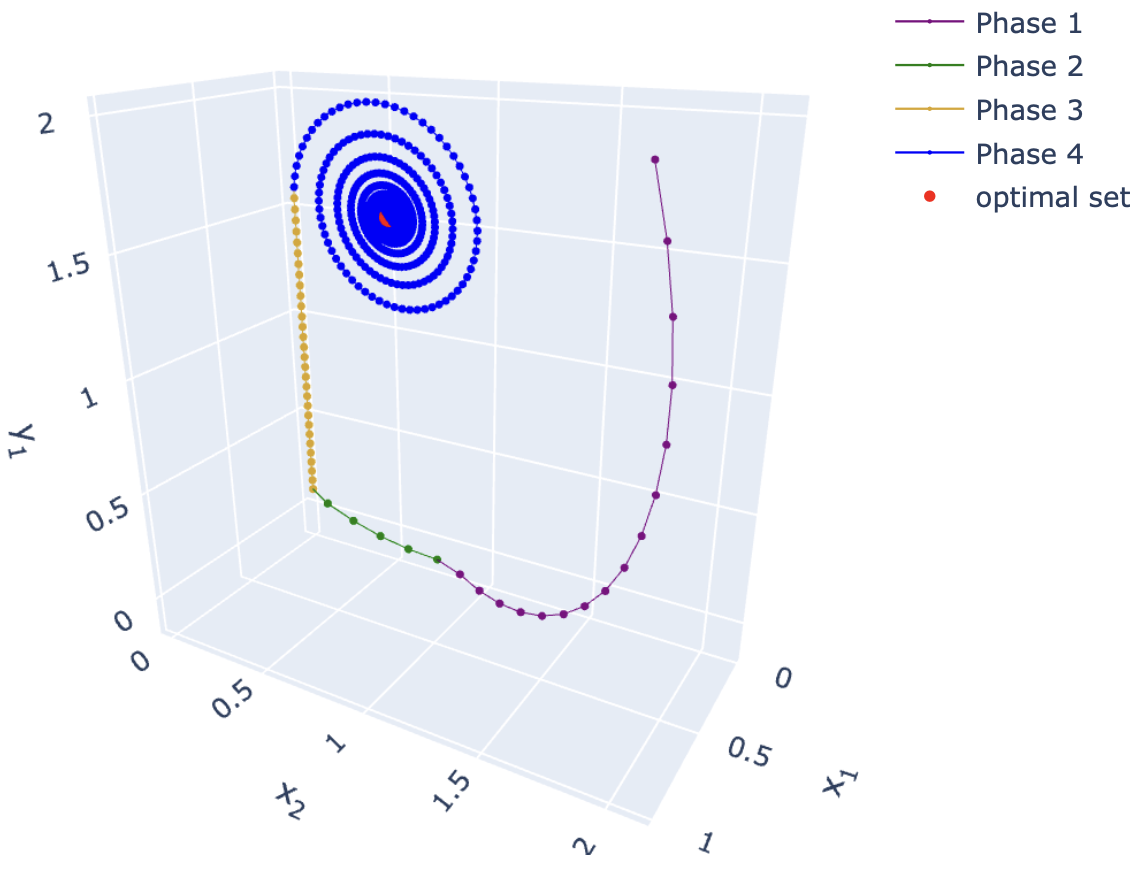}
    }
    \hspace{1em}
    \subcaptionbox{The trajectory of PDHG for $\min_\xbf 2\xbf_1+3\xbf_2, \st, \xbf_1+2\xbf_2=1$. \label{fig:phase-1-nobd}}{
        \includegraphics[width=0.4\linewidth]{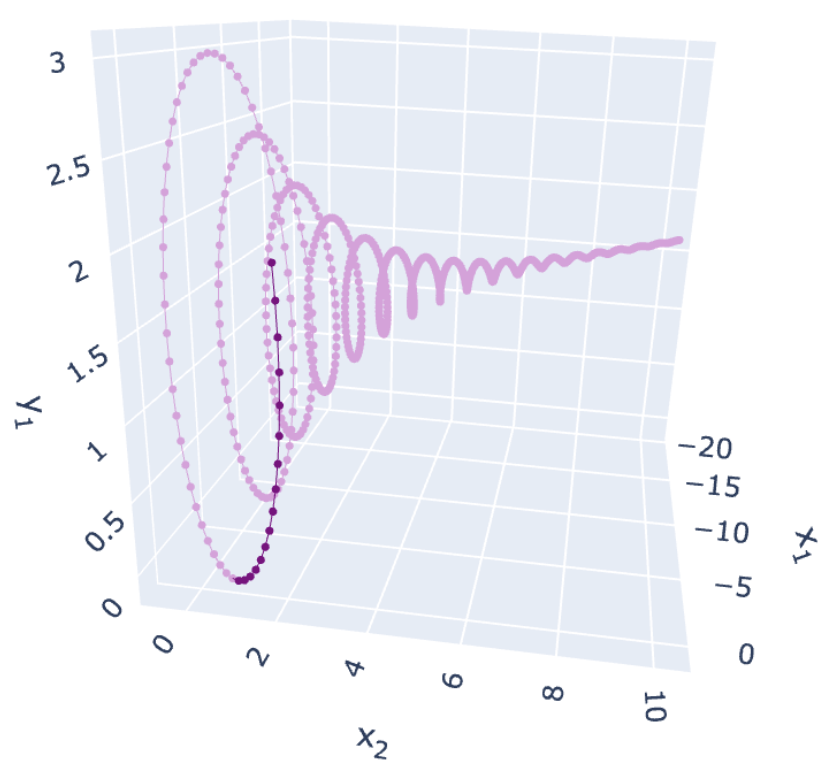}
    }
    \caption{The spiral behavior of PDHG.}
\end{figure}

The whole trajectory of the primal-dual solution $(\xbf, \ybf)$ is shown in Figure \ref{fig:pdlp-spiral-demo}, which can be divided into four phases.
\begin{itemize}
    \item Phase 1 (in purple)
    \begin{itemize}
        \item The first 16 steps form a huge arc while moving forward. Then PDHG hits the $\xbf_1 = 0$ plane and a basis change event happens.
        \item During Phase 1, we have $B=\{1,2\}$ and $N=\emptyset$.
    \end{itemize}
    \item Phase 2 (in green)
    \begin{itemize}
        \item During iterations from steps 17 to 21, PDHG arcs around within the $\xbf_1 = 0$ plane until it reaches another $\xbf_2 = 0$ plane.
        \item During Phase 2, we have $B=\{2\}$ and $N=\{1\}$.
    \end{itemize}
    \item Phase 3 (in khaki)
    \begin{itemize}
        \item Marching against $\xbf_1 = 0$ and $\xbf_2 = 0$, the trajectory degenerates to a ray along $\ybf_1$ axis.
        \item During Phase 3, we have $B=\emptyset$ and $N=\{1,2\}$.
    \end{itemize}
    \item Phase 4 (in blue)
    \begin{itemize}
        \item Finally, $\xbf_2$ leaves its bound and the optimal PDHG basis is identified. There is no more basis change event and PDHG spirals towards the optimal solution.
        \item During Phase 4, we have $B=\{2\}$ and $N=\{1\}$.
    \end{itemize}
\end{itemize}

Note that PDHG has already found the optimal PDHG basis in Phase 2, but changes the basis two more times due to the obstruction from the lower bound of $\xbf_2$. Actually, Phase 2 and Phase 4 share the same spiral center, and the directions of their spiral rays are both zero vectors.

Another interesting fact is that no basis change event will happen if we remove the bounds $\xbf_1,\xbf_2 \geq 0$. Phase 1 will continue and be fully observed as a combination of rotation and forward movement, as displayed in Figure \ref{fig:phase-1-nobd}. Moreover, the rotation converges rapidly; thus, PDHG may diverge almost straight along the IDV, which is an instance of the spiral ray.

The basis change events split the PDHG trajectory into multiple similar spiral phases. Our next goal is to analyze the spiral behavior within each phase, when the PDHG basis remains unchanged.

\subsubsection{PDHG Spiral Behavior within One Phase}
Within one phase, the basis of the PDHG iterates does not change, and it turns out the PDHG iterates follow a spiral ray, as observed in the example stated in the previous section. This can be formalized in the next theorem:
\begin{theorem}[The spiral behavior of PDHG] \label{thm:pdlp-spiral}
    Within one phase given the PDHG basis $(B, N)$, PDHG iterates as
    \begin{equation} \label{eq:iter}
        \begin{bmatrix}
            \xbf_B^{(k)} - (\xbf_\vbf)_B \\ \ybf^{(k)} - \ybf_\vbf
        \end{bmatrix}
        =
        \Pbf_B^k
        \begin{bmatrix}
            \xbf_B^{(0)} - (\xbf_\vbf)_B \\ \ybf^{(0)} - \ybf_\vbf
        \end{bmatrix}
        +
        k
        \begin{bmatrix}
            (\vbf_\xbf)_B \\ \vbf_\ybf
        \end{bmatrix},
        \ \xbf_N^{(k)} = \zerobf,
    \end{equation}
    where $\zbf^{(0)} = (\xbf^{(0)}, \ybf^{(0)})$ is the initial point of the phase, the matrix
    \begin{equation} \label{eq:sp-operator}
        \Pbf_B = \begin{bmatrix}
            \Ibf_{|B|}    & \eta \Abf_B^\top                   \\
            - \eta \Abf_B & \Ibf_m - 2\eta^2 \Abf_B\Abf_B^\top
        \end{bmatrix},
    \end{equation}
    the spiral center $\zbf_\vbf = (\xbf_\vbf, \ybf_\vbf)$ is given by
    \begin{equation} \label{eq:sp-center}
        \begin{aligned}
            (\xbf_\vbf)_B &= (\Abf_B^\top \Abf_B)^\dagger \Abf_B^\top \bbf + \proj_{\Abf_B\xbf = \zerobf}(\xbf_B^{(0)}),\ (\xbf_\vbf)_N = \zerobf \\
            \ybf_\vbf &= (\Abf_B \Abf_B^\top)^\dagger \Abf_B \cbf_B + \proj_{\Abf_B^\top\ybf = \zerobf}(\ybf^{(0)}),
        \end{aligned}
    \end{equation}
    and the ray direction $\vbf = (\vbf_\xbf, \vbf_\ybf)$ is given by
    \begin{equation} \label{eq:sp-ray}
        \begin{aligned}
            (\vbf_\xbf)_B &= - \eta \left[\cbf_B - \Abf_B^\top(\Abf_B \Abf_B^\top)^\dagger \Abf_B \cbf_B\right],\ (\vbf_\xbf)_N = \zerobf \\
            \vbf_\ybf &= \eta \left[\bbf - \Abf_B(\Abf_B^\top \Abf_B)^\dagger \Abf_B^\top \bbf\right].
        \end{aligned}
    \end{equation}
    Moreover, the rotation part converges to $\zerobf$, i.e.,
    \begin{equation} \label{eq:sp-cond-lim}
        \lim_{k\to\infty}
        \Pbf_B^k
        \begin{bmatrix}
            \xbf_B^{(0)} - (\xbf_\vbf)_B \\ \ybf^{(0)} - \ybf_\vbf
        \end{bmatrix}
        =
        \zerobf,
    \end{equation}
    and the forward movement is orthogonal to the rotation, i.e.,
    \begin{equation} \label{eq:sp-cond-orth}
        \vbf_B^\top \Pbf_B^k
        \begin{bmatrix}
            \xbf_B^{(0)} - (\xbf_\vbf)_B \\ \ybf^{(0)} - \ybf_\vbf
        \end{bmatrix}
        = 0, \ \forall k,
    \end{equation}
    where $\vbf_B = ((\vbf_\xbf)_B, \vbf_\ybf)$.
\end{theorem}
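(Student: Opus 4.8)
The strategy is that, once the basis $(B,N)$ is frozen throughout a phase, the PDHG update collapses to an affine-linear recursion whose linear part is exactly $\Pbf_B$ from \eqref{eq:sp-operator}; one then diagonalizes $\Pbf_B$ and solves the recursion in closed form. For the reduction, re-index so that $(\xbf^{(0)},\ybf^{(0)})$ is the first iterate of the phase and write $\wbf^{(k)}:=(\xbf_B^{(k)},\ybf^{(k)})$. For $i\in N$, every iterate of the phase has $\xbf_i=0$ and $\cbf_i-\Abf_i^\top\ybf>0$, so the gradient step sends coordinate $i$ strictly negative and $\proj_{\Rbb^n_+}$ returns $0$; hence $\xbf_N^{(k)}\equiv\zerobf$ and $\Abf\xbf^{(k)}=\Abf_B\xbf_B^{(k)}$. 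For $i\in B$ the projection is inactive on coordinate $i$ (if $\xbf_i=0$ this uses $\cbf_i-\Abf_i^\top\ybf\le 0$, which makes the gradient step nonnegative); the only genuinely delicate case — a strictly positive basic coordinate being clipped to zero while remaining basic — is addressed at the end of this plan. Substituting $\xbf_B^{(k+1)}=\xbf_B^{(k)}-\eta(\cbf_B-\Abf_B^\top\ybf^{(k)})$ into the $\ybf$-update and collecting terms gives $\wbf^{(k+1)}=\Pbf_B\wbf^{(k)}+\dbf$ with $\dbf=(-\eta\cbf_B,\ \eta\bbf+2\eta^2\Abf_B\cbf_B)$.

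Next I would analyze the spectrum of $\Pbf_B$ using an SVD of $\Abf_B$. For each positive singular value $\sigma$, with right/left singular vectors $\vbf,\ubf$, the plane $\text{span}\{(\vbf,\zerobf),(\zerobf,\ubf)\}$ is $\Pbf_B$-invariant, and in the basis $\big((\vbf,\zerobf),(\zerobf,\ubf)\big)$ the restriction is $\left[\begin{smallmatrix}1 & \eta\sigma\\ -\eta\sigma & 1-2\eta^2\sigma^2\end{smallmatrix}\right]$, with eigenvalues $(1-\eta^2\sigma^2)\pm i\sqrt{\eta^2\sigma^2(1-\eta^2\sigma^2)}$; since $0<\eta\sigma\le\eta\|\Abf\|_2<1$ (using $\|\Abf_B\|_2\le\|\Abf\|_2$), this is a non-real complex conjugate pair of modulus $\sqrt{1-\eta^2\sigma^2}\in(0,1)$. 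On $E_1:=\ker(\Abf_B)\times\ker(\Abf_B^\top)$ the operator $\Pbf_B$ acts as the identity, and $E_1$ is exactly its eigenspace for eigenvalue $1$ (any fixed vector $(\xbf_B,\ybf)$ satisfies $\Abf_B^\top\ybf=\zerobf$, and then $\Abf_B\xbf_B=\zerobf$). Since $\dim E_1+2\,\text{rank}(\Abf_B)=|B|+m$, these invariant subspaces span $\Rbb^{|B|+m}$, so $\Pbf_B$ is diagonalizable over $\Cbb$ with eigenvalue $1$ of multiplicity $|B|+m-2\,\text{rank}(\Abf_B)\ge 0$ and all remaining eigenvalues non-real of modulus $<1$, exactly as Definition~\ref{def:spiral-ray-center} demands. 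Writing $E_{<1}:=\text{range}(\Abf_B^\top)\times\text{range}(\Abf_B)$ for the span of the invariant planes, $E_1\perp E_{<1}$ in the Euclidean inner product (because $\ker(\Abf_B)=\text{range}(\Abf_B^\top)^\perp$ and $\ker(\Abf_B^\top)=\text{range}(\Abf_B)^\perp$); this Euclidean orthogonality is what makes rotation orthogonal to forward movement.

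I would then solve $\wbf^{(k+1)}=\Pbf_B\wbf^{(k)}+\dbf$ by splitting $\wbf^{(k)}=\wbf^{(k)}_1+\wbf^{(k)}_{<1}$ and $\dbf=\dbf_1+\dbf_{<1}$ along $E_1\oplus E_{<1}$. On $E_1$ (where $\Pbf_B=\Ibf$) one gets $\wbf^{(k)}_1=\wbf^{(0)}_1+k\,\dbf_1$: the linear drift. On $E_{<1}$, $\Ibf-\Pbf_B$ is invertible, so with $\wbf_\star:=(\Ibf-\Pbf_B)^{-1}\dbf_{<1}\in E_{<1}$ one has $\wbf^{(k)}_{<1}-\wbf_\star=\Pbf_B^k(\wbf^{(0)}_{<1}-\wbf_\star)$, which tends to $\zerobf$ since $\Pbf_B$ has spectral radius $<1$ on $E_{<1}$ — this is \eqref{eq:sp-cond-lim}. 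Putting $\zbf_\vbf:=\wbf^{(0)}_1+\wbf_\star$ and $\vbf:=\dbf_1$, and noting $\wbf^{(0)}-\zbf_\vbf=\wbf^{(0)}_{<1}-\wbf_\star\in E_{<1}$ (a $\Pbf_B$-invariant subspace), one obtains $\wbf^{(k)}-\zbf_\vbf=\Pbf_B^k(\wbf^{(0)}-\zbf_\vbf)+k\vbf$, which with $\xbf_N^{(k)}=\zerobf$ is \eqref{eq:iter}. Since $\vbf\in E_1$ and $\Pbf_B^k(\wbf^{(0)}-\zbf_\vbf)\in E_{<1}$ with $E_1\perp E_{<1}$, both \eqref{eq:sp-cond-orth} and the requirement in Definition~\ref{def:spiral-ray-center} that $\wbf^{(0)}-\zbf_\vbf$ be orthogonal to the eigenvalue-$1$ eigenvectors follow immediately.

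It remains to identify the abstract quantities $\vbf=\proj_{E_1}(\dbf)$ and $\zbf_\vbf=\proj_{E_1}(\wbf^{(0)})+(\Ibf-\Pbf_B)^{-1}\dbf_{<1}$ with \eqref{eq:sp-ray}–\eqref{eq:sp-center}. I would use the projection identities $\Abf_B^\top(\Abf_B\Abf_B^\top)^\dagger\Abf_B=\Abf_B^\dagger\Abf_B=\proj_{\text{range}(\Abf_B^\top)}$ and $\Abf_B(\Abf_B^\top\Abf_B)^\dagger\Abf_B^\top=\Abf_B\Abf_B^\dagger=\proj_{\text{range}(\Abf_B)}$, the block-diagonal form of $\proj_{E_1}$ ($\proj_{\ker(\Abf_B)}$ on the $\xbf$-block, $\proj_{\ker(\Abf_B^\top)}$ on the $\ybf$-block), and $\proj_{\Abf_B\xbf=\zerobf}(\cdot)=\proj_{\ker(\Abf_B)}(\cdot)$. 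Projecting $\dbf$ onto $E_1$ — the term $\Abf_B\cbf_B\in\text{range}(\Abf_B)$ drops from the $\ybf$-block — yields \eqref{eq:sp-ray} (with $(\vbf_\xbf)_N=\zerobf$). For the center, I would check directly that \eqref{eq:sp-center} satisfies the two conditions $\proj_{E_1}(\zbf_\vbf)=\proj_{E_1}(\wbf^{(0)})$ and $(\Ibf-\Pbf_B)\zbf_\vbf=\dbf-\vbf$; since $\Ibf-\Pbf_B$ is invertible on $E_{<1}$, these determine $\zbf_\vbf$ uniquely, so \eqref{eq:sp-center} coincides with $\wbf^{(0)}_1+\wbf_\star$. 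The main obstacles I anticipate are (i) making the reduction to the linear recursion fully rigorous — especially ruling out, or otherwise handling, a strictly positive basic coordinate that gets clipped to zero without changing the basis, which would break the linear model — and (ii) the routine-but-lengthy pseudoinverse bookkeeping in this last step; by contrast, the spectral analysis is straightforward once organized through the SVD.
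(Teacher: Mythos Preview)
Your plan is correct and matches the paper's proof almost step for step: reduce to the affine recursion $\wbf^{(k+1)}=\Pbf_B\wbf^{(k)}+\dbf$, use the SVD of $\Abf_B$ to split $\Rbb^{|B|+m}$ into the eigenvalue-$1$ space $E_1=\ker(\Abf_B)\times\ker(\Abf_B^\top)$ and contracting $2\times2$ blocks on $E_{<1}=\text{range}(\Abf_B^\top)\times\text{range}(\Abf_B)$, and read off \eqref{eq:iter}--\eqref{eq:sp-cond-orth} from the orthogonal decomposition $E_1\oplus E_{<1}$ (the paper plugs in the stated formulas \eqref{eq:sp-center}--\eqref{eq:sp-ray} and verifies them directly rather than deriving them from $\proj_{E_1}(\dbf)$ and $(\Ibf-\Pbf_B)^{-1}\dbf_{<1}$, but this is purely organizational). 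Your concern (i) about a strictly positive basic coordinate being clipped to zero while the basis survives is legitimate in principle, but the paper's proof simply asserts the linear recursion within a phase without addressing this edge case either, so you are not missing anything relative to the published argument.
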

The proof is given in Appendix \ref{apdx:prove-thm-pdlp-spiral}.

Equation \eqref{eq:iter} presents a closed-form solution of the iterates within one phase. Intuitively, PDHG spirals by rotating around $\zbf_\vbf$ and moving forward along $\vbf$. The rotation and forward movement are orthogonal to each other. No matter how PDHG rotates in the hyperplane $\vbf^\top\zbf = 0$, it will firmly step one unit of $\vbf$ along the spiral ray at each iteration. The ray direction is unique for one phase regardless of the initial solution $\zbf^{(0)}$, but the spiral centers may depend on the initial solution (i.e., the projection term in \eqref{eq:sp-center}).

To be more specific, within one phase, PDHG classifies $\xbf$ into $\xbf_B$ and $\xbf_N$, where $\xbf_B$ are movable and $\xbf_N$ are temporarily treated as if fixed at their bounds. $\Pbf_B$ is the linear operator that causes rotation. $(\xbf_\vbf)_B$ is part of the spiral center corresponding to the movable part of $\xbf$. $(\vbf_\xbf)_B$ is part of the ray direction corresponding to the movable part of $\xbf$.

Furthermore, in terms of the influence of the step size $\eta$, in general, a larger step size will not only help the rotation converge faster by reducing the moduli of the eigenvalues of $\Pbf_B$ but also accelerate the forward movement along the ray direction.

\subsubsection{Understanding How the Spiral Behavior Helps the Convergence}

Theorem \ref{thm:pdlp-spiral} presents orthogonally decomposed spiral behavior of PDHG into forward movement along the ray direction $\vbf$ and rotation within the hyperplane $\vbf^\top\zbf = 0$. These evidences almost form a satisfying description of what PDHG is doing in one phase. Here we would like to further discuss how the trajectory of a spiral can lead PDHG to the optimum.

The optimality of an LP contains primal feasibility, dual feasibility, and duality gap. Considering how the PDHG spiral affects all three aspects, we have the following proposition:

\begin{prop} \label{prop:opt-improve}
    Within one phase given the PDHG basis $(B, N)$, the rotation improves the primal and dual feasibility by approaching the spiral center with the least squares primal error $\|\Abf_B \xbf_B - \bbf\|_2$ and dual error $\|\Abf_B^\top \ybf - \cbf_B\|_2$.
    The forward movement improves the duality gap at the spiral center in the sense that the duality gap along the spiral ray monotonically decays, i.e., $\cbf^\top \vbf_\xbf - \bbf^\top \vbf_\ybf \leq 0$. Furthermore, if the forward movement is nonzero (i.e., $\vbf\not=\zerobf$), we have a strict decay of the duality gap when moving along the direction of $\vbf$, i.e., $\cbf^\top \vbf_\xbf - \bbf^\top \vbf_\ybf < 0$.
\end{prop}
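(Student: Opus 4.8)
The plan is to reduce all three assertions to elementary facts about the orthogonal projectors associated with $\Abf_B$, so that the bulk of the argument is a short computation from the formulas in Theorem \ref{thm:pdlp-spiral}. The first step is to rewrite the ray direction \eqref{eq:sp-ray} in projector form. Using the Moore–Penrose identities $\Abf_B^\dagger = (\Abf_B^\top\Abf_B)^\dagger\Abf_B^\top = \Abf_B^\top(\Abf_B\Abf_B^\top)^\dagger$, the matrix $\Abf_B^\top(\Abf_B\Abf_B^\top)^\dagger\Abf_B$ is the orthogonal projector onto $\text{range}(\Abf_B^\top)$ and $\Abf_B(\Abf_B^\top\Abf_B)^\dagger\Abf_B^\top$ is the orthogonal projector onto $\text{range}(\Abf_B)$. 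Substituting into \eqref{eq:sp-ray} and using $\text{range}(\Abf_B^\top)^\perp = \{\xbf:\Abf_B\xbf=\zerobf\}$ and $\text{range}(\Abf_B)^\perp = \{\ybf:\Abf_B^\top\ybf=\zerobf\}$ yields $(\vbf_\xbf)_B = -\eta\,\proj_{\Abf_B\xbf=\zerobf}(\cbf_B)$ and $\vbf_\ybf = \eta\,\proj_{\Abf_B^\top\ybf=\zerobf}(\bbf)$; in particular $\Abf_B(\vbf_\xbf)_B = \zerobf$, $\Abf_B^\top\vbf_\ybf = \zerobf$, and $(\vbf_\xbf)_N = \zerobf$.

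For the feasibility claim I would combine this with \eqref{eq:sp-cond-lim}. Writing the iterate as $\zbf^{(k)} = \zbf_\vbf + \Pbf_B^k(\zbf^{(0)} - \zbf_\vbf) + k\vbf$, the facts $\Abf_B(\vbf_\xbf)_B = \zerobf$ and $\Abf_B^\top\vbf_\ybf = \zerobf$ mean the forward-movement term contributes nothing to the residuals $\Abf_B\xbf_B^{(k)} - \bbf$ and $\Abf_B^\top\ybf^{(k)} - \cbf_B$, so these equal the residuals of $\zbf_\vbf + \Pbf_B^k(\zbf^{(0)} - \zbf_\vbf)$ and, by \eqref{eq:sp-cond-lim}, converge to the residuals at the spiral center $\zbf_\vbf$. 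It then remains to evaluate the latter: from \eqref{eq:sp-center}, $(\xbf_\vbf)_B = \Abf_B^\dagger\bbf + \proj_{\Abf_B\xbf=\zerobf}(\xbf_B^{(0)})$, so $\Abf_B(\xbf_\vbf)_B = \Abf_B\Abf_B^\dagger\bbf = \proj_{\text{range}(\Abf_B)}(\bbf)$ and hence $\|\Abf_B(\xbf_\vbf)_B - \bbf\|_2 = \dist(\bbf,\text{range}(\Abf_B)) = \min_{\xbf_B}\|\Abf_B\xbf_B - \bbf\|_2$; symmetrically $\|\Abf_B^\top\ybf_\vbf - \cbf_B\|_2 = \min_{\ybf}\|\Abf_B^\top\ybf - \cbf_B\|_2$. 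This is precisely the assertion that the rotation drives the iterates toward the least-squares-feasible spiral center.

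For the duality-gap claim, since $(\vbf_\xbf)_N = \zerobf$ and orthogonal projectors are symmetric and idempotent, $\cbf^\top\vbf_\xbf = \cbf_B^\top(\vbf_\xbf)_B = -\eta\,\cbf_B^\top\proj_{\Abf_B\xbf=\zerobf}(\cbf_B) = -\eta\,\|\proj_{\Abf_B\xbf=\zerobf}(\cbf_B)\|_2^2$, and likewise $\bbf^\top\vbf_\ybf = \eta\,\|\proj_{\Abf_B^\top\ybf=\zerobf}(\bbf)\|_2^2$, so $\cbf^\top\vbf_\xbf - \bbf^\top\vbf_\ybf = -\eta(\|\proj_{\Abf_B\xbf=\zerobf}(\cbf_B)\|_2^2 + \|\proj_{\Abf_B^\top\ybf=\zerobf}(\bbf)\|_2^2) \le 0$ because $\eta > 0$. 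The duality gap at $\zbf_\vbf + \theta\vbf$ equals $(\cbf^\top\xbf_\vbf - \bbf^\top\ybf_\vbf) + \theta(\cbf^\top\vbf_\xbf - \bbf^\top\vbf_\ybf)$, affine in $\theta$ with nonpositive slope, hence nonincreasing along the spiral ray. For the strict statement, if $\vbf \neq \zerobf$ then, using $(\vbf_\xbf)_N = \zerobf$ once more, either $(\vbf_\xbf)_B \neq \zerobf$, forcing $\|\proj_{\Abf_B\xbf=\zerobf}(\cbf_B)\|_2 = \|(\vbf_\xbf)_B\|_2/\eta > 0$, or $\vbf_\ybf \neq \zerobf$, forcing $\|\proj_{\Abf_B^\top\ybf=\zerobf}(\bbf)\|_2 > 0$; in either case the bracketed sum is positive and the slope strictly negative.

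The computations are all short, so there is no serious obstacle; the one place that needs care is the pseudoinverse bookkeeping — correctly recognizing $\Abf_B(\Abf_B^\top\Abf_B)^\dagger\Abf_B^\top$ and $\Abf_B^\top(\Abf_B\Abf_B^\top)^\dagger\Abf_B$ as the orthogonal projectors onto $\text{range}(\Abf_B)$ and $\text{range}(\Abf_B^\top)$, and keeping straight which subspace is the null space of which map so that the projectors appearing in $\vbf$ and in $\zbf_\vbf$ are matched correctly. Once that is settled, symmetry and idempotency of the projectors together with $\eta > 0$ deliver everything.
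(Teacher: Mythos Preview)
Your proposal is correct and follows essentially the same approach as the paper: both arguments recognize that $(\vbf_\xbf)_B$ and $\vbf_\ybf$ are (up to sign and the factor $\eta$) the orthogonal projections of $\cbf_B$ and $\bbf$ onto $\ker\Abf_B$ and $\ker\Abf_B^\top$, so that $\cbf^\top\vbf_\xbf$ and $-\bbf^\top\vbf_\ybf$ become negative squared norms, and both verify the least-squares property of the spiral center via the normal equations. The only cosmetic difference is that the paper carries out these projector identities explicitly through the SVD of $\Abf_B$ (obtaining $\Vbf_{|B|-r}\Vbf_{|B|-r}^\top$ and $\Ubf_{m-r}\Ubf_{m-r}^\top$), whereas you invoke symmetry and idempotency of the projectors directly; your route is slightly cleaner but the underlying argument is the same.
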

The proof is provided in Appendix \ref{apdx:prove-prop-opt-improve}.

The primal feasibility $\Abf \xbf = \bbf, \xbf \geq \zerobf$ is naturally improved via rotating to minimize the difference between $\Abf\xbf$ and $\bbf$. In contrast, when optimizing the dual feasibility, PDHG not only aims to satisfy $\Abf^\top \ybf \leq \cbf$, but also prefers to activate $\Abf_B^\top \ybf = \cbf_B$. This can be explained from the perspective of complementary slackness. In a primal-dual optimal solution $(\xbf, \ybf)$ to \eqref{prob:lp}, the complementary slackness condition requires $\xbf_i (\cbf_i - \Abf_i^\top \ybf) = 0, \forall i$. Therefore, for those primal variables in $B$, it is better to activate the corresponding dual constraints for optimality.

As for the effect of rotation in the duality gap, damped oscillation occurs around the primal and dual objectives at the spiral center, resulting in the total duality gap exhibiting an oscillating improvement.

\subsubsection{Basis Change Event}
Basis change events happen between two adjacent periods when the spiral hits new bounds or escapes from current bounds, i.e., for the phase with PDHG basis $(B, N)$, the next step $(\xbf^+, \ybf^+)$ satisfies
\begin{equation} \label{eq:leave-basis}
    \xbf_i^+ = 0, \ \cbf_i - \Abf_i^\top\ybf^+ > 0, \ \exists i \in B,
\end{equation}
or
\begin{equation} \label{eq:enter-basis}
    \cbf_i - \Abf_i^\top \ybf^+ \leq 0, \ \exists i \in N.
\end{equation}
We call the condition \eqref{eq:leave-basis} the ``leaving basis'' condition, since the basic coordinate $i\in B$ satisfying \eqref{eq:leave-basis} leaves the basis in the next iteration. Similarly, we call condition \eqref{eq:enter-basis} the ``entering basis'' condition, since the non-basic coordinate $i\in N$ satisfying \eqref{eq:enter-basis} enters the basis in the next iteration.

To be more specific, the basis change event corresponds to the activation of $\xbf_B \geq \zerobf$ and the satisfaction of $\Abf_N^\top\ybf < \cbf_N$. From the perspective of the spiral, as shown in Figure \ref{fig:basis-change}, three typical reasons will cause a basic variable to leave the basis:
\begin{itemize}
    \item First, the spiral center is located outside the bounds (and forward movement is ignored for simplicity). Since rotation converges to the center, PDHG will meet bounds before converging, yielding a basis change event.
    \item Second, although the spiral center is located inside the bounds (and forward movement is ignored for simplicity), the spiral radius, i.e., the distance between the current point and the spiral center, is too large.
    \item Third, the spiral ray has a strictly negative component in its direction, which will eventually hit the bound.
\end{itemize}
For the entering basis event, when $\ybf^+$ violates the dual constraints $\Abf_N^\top\ybf < \cbf_N$, the corresponding components of $\xbf^+_N$ have a tendency to increase, making the projection operator redundant in the next step.

It is worth highlighting that, differing from the optimality improvement in one phase, the basis change event is usually triggered by the combined action of rotation and forward movement, instead of by one side alone. For example, though the spiral center may lie outside the bounds as shown in Figure \ref{fig:rotate1-bc}, the spiral ray is likely to intersect the bounds, leading to a basis change event similar to the second case in Figure \ref{fig:rotate2-bc}.
\begin{figure}[!ht]
    \centering
    \subcaptionbox{The spiral center is outside the bounds. \label{fig:rotate1-bc}}{
        \includegraphics[height=0.2\linewidth]{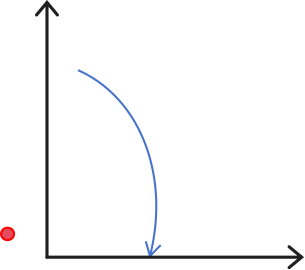}
    }
    \hfill
    \subcaptionbox{The spiral center is inside the bounds, but the radius is too large. \label{fig:rotate2-bc}}{
        \includegraphics[height=0.2\linewidth]{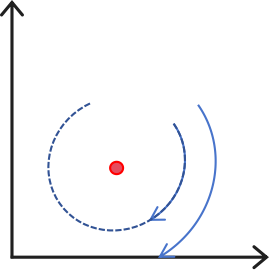}
    }
    \hfill
    \subcaptionbox{The ray hits the bounds. \label{fig:line-bc}}{
        \includegraphics[height=0.2\linewidth]{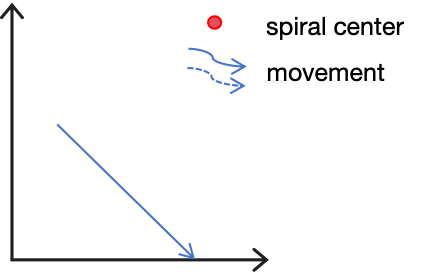}
    }
    \caption{Three typical cases for the basis change in the first quadrant.} \label{fig:basis-change}
\end{figure}

\section{Crossover Inspired by PDHG} \label{sec:cross}

Without loss of generality, we assume $\Abf$ is full row rank in this section. For a primal-dual feasible LP, PDLP returns only a primal-dual optimal solution pair with PDHG basis $(B, N)$ satisfying \eqref{eq:pdhg-optimal}
\begin{equation} \label{eq:pdhg-optimal}
    \begin{aligned}
        & \Abf_B \xbf_B = \bbf, \ \xbf_B \geq \zerobf,\ \xbf_N = \zerobf        \\
        & \cbf_B - \Abf_B^\top \ybf = \zerobf, \ \cbf_N - \Abf_N^\top \ybf > \zerobf,
    \end{aligned}
\end{equation}
rather than the optimal vertex with basis $(B, N)$ satisfying \eqref{eq:vertex-optimal}
\begin{equation} \label{eq:vertex-optimal}
    \begin{aligned}
        \xbf_B &= (\Abf_B)^{-1} \bbf \geq \zerobf, \ \xbf_N = \zerobf \\
        \ybf   &= (\Abf_B^\top)^{-1} \cbf_B, \ \cbf_N - \Abf_N^\top \ybf \geq \zerobf.
    \end{aligned}
\end{equation}
Compared with \eqref{eq:pdhg-optimal}, here a vertex must be determined by $|B| = m$ linearly independent columns of $\Abf$, and there may also be zero components in $\cbf_N - \Abf_N^\top \ybf$. Crossover for PDLP refers to obtaining a solution that satisfies \eqref{eq:vertex-optimal} from a solution satisfying \eqref{eq:pdhg-optimal}.

One important observation is that if we
properly fix variables on bounds, then a feasible solution to LP is also an optimal solution to the LP, which is formally stated below in Proposition \ref{prop:fix-lp}.
\begin{prop} \label{prop:fix-lp}
    Given the primal-dual optimal solution $(\xbf, \ybf)$ to \eqref{prob:lp}, denote $B = \{i:\xbf_i > 0\}$, $E = \{i: \xbf_i = 0\}$, $D = \{i:\cbf_i - \Abf_i^\top \ybf = 0\}$, and $N = \{i: \cbf_i - \Abf_i^\top \ybf > 0\}$. Any feasible solution $(\tilde{\xbf}, \tilde{\ybf})$ satisfying
    \begin{equation} \label{eq:fix-lp}
        \begin{aligned}
            & \Abf_B \tilde{\xbf}_B = \bbf, \ \tilde{\xbf}_B \geq \zerobf, \ \tilde{\xbf}_E = \zerobf \\
            & \cbf_D - \Abf_D^\top \tilde{\ybf} = \zerobf, \ \cbf_N - \Abf_N^\top \tilde{\ybf} \geq \zerobf
        \end{aligned}
    \end{equation}
    also form a primal-dual optimal solution to the original LP.
\end{prop}
The proof is given in Appendix \ref{apdx:prove-prop-fix-lp}.

Therefore, we can iteratively push $\tilde{\xbf}_B$ and $\Abf_N^\top\tilde{\ybf}$ in \eqref{eq:fix-lp} to their bounds and fix those on bounds until a vertex is found, which is precisely the idea of \citet{megiddo1991finding}.

Another important phenomenon, which is the key to our crossover method, is that when PDHG starts from a primal (or dual) feasible solution within one phase, it also becomes the start of the primal (or dual) PDHG spiral ray.
\begin{prop} \label{prop:feas-point}
    Within one phase with PDHG basis $(B, N)$ and the initial solution $(\xbf^{(0)}, \ybf^{(0)})$, if $\Abf_B\xbf^{(0)}_B = \bbf$, then we have $\xbf_\vbf = \xbf^{(0)}$. Similarly, if $\Abf_B^\top \ybf^{(0)} = \cbf_B$, then we have $\ybf_\vbf = \ybf^{(0)}$.
\end{prop}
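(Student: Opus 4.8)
The plan is to substitute the hypothesis directly into the closed‑form expression for the spiral center in \eqref{eq:sp-center} of Theorem~\ref{thm:pdlp-spiral} and simplify using standard properties of the Moore–Penrose inverse and of orthogonal projectors. The algebraic facts I would record first are: (i) $(\Abf_B^\top \Abf_B)^\dagger \Abf_B^\top = \Abf_B^\dagger$ and, applying this to $\Abf_B^\top$ in place of $\Abf_B$, $(\Abf_B \Abf_B^\top)^\dagger \Abf_B = (\Abf_B^\top)^\dagger$ (both checked in one line from an SVD $\Abf_B = \Ubf\Sigmabf\Vbf^\top$); (ii) $\Abf_B^\dagger \Abf_B$ is the orthogonal projector onto $\text{range}(\Abf_B^\top)$, hence the orthogonal projector onto the nullspace $\{\xbf_B : \Abf_B \xbf_B = \zerobf\}$ is $\Ibf - \Abf_B^\dagger \Abf_B$, i.e.\ $\proj_{\Abf_B\xbf=\zerobf}(\xbf_B^{(0)}) = (\Ibf - \Abf_B^\dagger \Abf_B)\xbf_B^{(0)}$; and symmetrically $(\Abf_B^\top)^\dagger \Abf_B^\top = \Abf_B \Abf_B^\dagger$ is the projector onto $\text{range}(\Abf_B)$, so $\proj_{\Abf_B^\top\ybf=\zerobf}(\ybf^{(0)}) = (\Ibf - \Abf_B \Abf_B^\dagger)\ybf^{(0)}$. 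None of this needs $\Abf_B$ to have full column or row rank, which is the point of working with pseudoinverses: in a general phase the columns indexed by $B$ need not be linearly independent.

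For the primal claim I would first dispose of the non‑basic block: $(\xbf_\vbf)_N = \zerobf = \xbf_N^{(0)}$, since every iterate of the phase, the initial one included, has zero non‑basic coordinates. On the basic block, rewrite \eqref{eq:sp-center} as $(\xbf_\vbf)_B = \Abf_B^\dagger \bbf + (\Ibf - \Abf_B^\dagger \Abf_B)\xbf_B^{(0)}$ via (i)–(ii). Substituting the hypothesis $\bbf = \Abf_B \xbf_B^{(0)}$ gives $(\xbf_\vbf)_B = \Abf_B^\dagger \Abf_B \xbf_B^{(0)} + \xbf_B^{(0)} - \Abf_B^\dagger \Abf_B \xbf_B^{(0)} = \xbf_B^{(0)}$, and combining the two blocks yields $\xbf_\vbf = \xbf^{(0)}$. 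The dual claim is entirely parallel: rewrite $\ybf_\vbf = (\Abf_B^\top)^\dagger \cbf_B + (\Ibf - \Abf_B \Abf_B^\dagger)\ybf^{(0)}$, substitute $\cbf_B = \Abf_B^\top \ybf^{(0)}$, and use $(\Abf_B^\top)^\dagger \Abf_B^\top = \Abf_B \Abf_B^\dagger$ so that the two $\Abf_B \Abf_B^\dagger \ybf^{(0)}$ terms cancel, leaving $\ybf_\vbf = \ybf^{(0)}$.

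There is no real obstacle here; the statement is a direct consequence of the closed form in Theorem~\ref{thm:pdlp-spiral}. The only thing to be careful about is never silently assuming invertibility of $\Abf_B^\top \Abf_B$ or $\Abf_B \Abf_B^\top$, so I would state identity (i) explicitly with its short SVD justification and phrase everything through projectors. If desired I would also add the geometric remark that this is exactly what one expects: $\Abf_B^\dagger \bbf$ is the minimum‑norm particular solution of $\Abf_B\xbf_B = \bbf$, living in $\text{range}(\Abf_B^\top)$, while the projection term supplies the $\ker(\Abf_B)$ component of $\xbf_B^{(0)}$; when $\xbf_B^{(0)}$ already solves $\Abf_B\xbf_B=\bbf$, its own row‑space component equals $\Abf_B^\dagger\bbf$, and the two pieces reassemble $\xbf_B^{(0)}$ (dually for $\ybf^{(0)}$).
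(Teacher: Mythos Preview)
Your proposal is correct and follows essentially the same argument as the paper: both substitute the hypothesis into the closed-form spiral center \eqref{eq:sp-center}, invoke the Moore--Penrose identity $(\Abf_B^\top \Abf_B)^\dagger \Abf_B^\top = \Abf_B^\dagger = \Abf_B^\top (\Abf_B \Abf_B^\top)^\dagger$, and observe that the projector term and the particular-solution term reassemble $\xbf_B^{(0)}$ (respectively $\ybf^{(0)}$). The only cosmetic difference is that you write the projector as $\Ibf - \Abf_B^\dagger \Abf_B$ while the paper writes it as $\Ibf - \Abf_B^\top (\Abf_B \Abf_B^\top)^\dagger \Abf_B$, which are the same object.
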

The proof is provided in Appendix \ref{apdx:prove-prop-feas-point}.

During each crossover push, Proposition \ref{prop:feas-point} inspires us to guide those unfixed variables along the PDHG ray direction.

\subsection{Crossover Framework}

Many crossover frameworks have been designed \citep{megiddo1991finding, bixby1994recovering, andersen1996combining}, most combining simplex methods and IPMs. Following Megiddo's idea, our crossover mainly includes three parts:
\begin{enumerate}
    \item Primal push moves $\xbf$ towards their bounds while maintaining the primal feasibility. After this process, the primal part of an optimal vertex solution is found, although degeneracy may exist.
    \item Dual push moves $\ybf$ to activate as many dual constraints as possible while maintaining the dual feasibility.  After this process, the dual part of an optimal vertex solution is obtained.
    \item Linear independence check will no more change solution values but select basic columns to construct the nonsingular square matrix $\Abf_B$.
\end{enumerate}

\begin{algorithm}[!ht]
    \caption{Crossover Framework} \label{alg:crossover-frame}
    \KwData{The standard LP problem with $\cbf, \Abf, \bbf$.}
    \KwIn{Initial optimal solution $(\xbf, \ybf) \in \Rbb^n_+ \times \Rbb^m$.}
    \KwOut{The optimal vertex solution $(\xbf, \ybf)$ and the optimal basis $(B, N)$.}
    
    \tcc{----------------------------------------------------------------------------------------------------}
    \tcc{Primal Push}
    \tcc{----------------------------------------------------------------------------------------------------}
    $B \leftarrow \{i: \xbf_i > 0\}$ \tcp*{initialize basic set}
    \While{primal support can be further reduced}{
        Generate a primal direction $(\deltabf_\xbf)_B \neq \zerobf$ in the kernel of $\Abf_B$ by solving \eqref{prob:ols-dual} and calculating the primal spiral ray in \eqref{eq:sp-ray} with cost perturbation and $\eta = 1$ \;
        \If{$(\deltabf_\xbf)_B \neq \zerobf$}{
            \If{$(\deltabf_\xbf)_B \geq \zerobf$}{
                $(\deltabf_\xbf)_B \leftarrow -(\deltabf_\xbf)_B$ \tcp*{make sure to push basic variables to bounds}
            }
            $\theta \leftarrow \min_{\{i\in B: (\deltabf_\xbf)_i < 0\}}\{-\frac{\xbf_i}{(\deltabf_\xbf)_i}\}$ \tcp*{primal ratio test}
            $\xbf_B \leftarrow \xbf_B + \theta (\deltabf_\xbf)_B$ \tcp*{move $\xbf_B$ to reduce its support}
            $B \leftarrow B \setminus \{i\in B: \xbf_i = 0\}$ \tcp*{update basic set}
        }
    }
    
    \tcc{----------------------------------------------------------------------------------------------------}
    \tcc{Dual Push}
    \tcc{----------------------------------------------------------------------------------------------------}
    $D \leftarrow \{i: \cbf_i - \Abf_i^\top\ybf = 0\}$, $N \leftarrow \{i: \cbf_i - \Abf_i^\top\ybf > 0\}$ \tcp*{initialize dual activation set}
    \While{dual constraints can be further activated}{
        Generate a dual direction $\deltabf_\ybf \neq \zerobf$ in the kernel of $\Abf_D^\top$ by solving \eqref{prob:ols-primal} and calculating the dual spiral ray in \eqref{eq:sp-ray} with right-hand-side perturbation and $\eta = 1$ \;
        \If{$\Abf_N^\top\deltabf_\ybf \neq \zerobf$}{
            \If{$\Abf_N^\top\deltabf_\ybf \leq \zerobf$}{
                $\deltabf_\ybf \leftarrow -\deltabf_\ybf$ \tcp*{make sure to activate constraints}
            }
            $\theta \leftarrow \min_{\{i\in N: \Abf^\top_i\deltabf_\ybf > 0\}}\{\frac{\cbf_i - \Abf_i^\top\ybf}{\Abf^\top_i\deltabf_\ybf}\}$ \tcp*{dual ratio test}
            $\ybf \leftarrow \ybf + \theta \deltabf_\ybf$ \tcp*{move $\ybf$ to activate dual constraints}
            $D \leftarrow D \cup \{i\in N: \cbf_i - \Abf_i^\top\ybf = 0\}$, $N \leftarrow N \setminus \{i\in N: \cbf_i - \Abf_i^\top\ybf = 0\}$ \tcp*{update dual activation set}
        }
    }
    
    \tcc{----------------------------------------------------------------------------------------------------}
    \tcc{Linear Independence Check}
    \tcc{----------------------------------------------------------------------------------------------------}
    Select $m - |B|$ columns $\Abf_C$ from $\Abf_{D\setminus B}$ via LU decomposition and combine them with columns of $\Abf_B$ to form the entire basis \;
    $B \leftarrow B \cup C$, $N \leftarrow \{1, \cdots, n\} \setminus B$ \tcp*{construct the optimal basic set}
\end{algorithm}

The fundamental difference between our crossover algorithm and Megiddo's scheme is that we use directions $(\deltabf_\xbf)_B$ and $\deltabf_\ybf$ inspired by PDHG rather than the pivot direction in simplex methods to move the variables. Another difference from Megiddo's framework is that we check the linear independence after the dual push is completed to separate the two processes more independently, while Megiddo checks the linear independence immediately once the dual push identifies newly activated dual constraints. In terms of theoretical guarantees, following the same analysis of \citet{megiddo1991finding}, we can show that this PDHG-inspired crossover will succeed with probability (w.p.) 1. 

We formally present the crossover scheme in Algorithm \ref{alg:crossover-frame}, which includes three major components:

\paragraph{Primal push.}

After recognizing the basic set $B$, we solve a series of OLS subproblems \eqref{prob:ols-dual} with various randomly generated costs $\tilde{\cbf}_B$ to calculate the PDHG primal ray directions \eqref{eq:sp-ray}.
\begin{equation} \label{prob:ols-dual}
    \min\ \frac{1}{2} \|\Abf_B^\top \ybf - \tilde{\cbf}_B\|_2^2.
\end{equation}
From Proposition \ref{prop:feas-point}, since the current primal solution $\xbf_B$ is still feasible for \eqref{prob:lp} with cost perturbation, it is located at the primal spiral center of the rotation in the current phase of PDHG. We can then start from $\xbf_B$ and move along $(\deltabf_\xbf)_B$ until reaching new boundaries, mirroring how the PDHG solves the cost-perturbed LPs. At the end of the primal push, no more primal variable can be pushed to its bound when columns of $\Abf_B$ are linearly independent. OLS can be solved by the direct QR factorization, LSMR \citep{fong2011lsmr}, or the conjugate gradient (CG) method.

\paragraph{Dual push.}

In the dual push, similarly, we compute a series of OLS subproblems \eqref{prob:ols-primal} with perturbed right-hand side $\tilde{\bbf}$ to calculate the dual ray directions of PDHG~\eqref{eq:sp-ray}.
\begin{equation} \label{prob:ols-primal}
    \min\ \frac{1}{2} \|\Abf_D \xbf_D - \tilde{\bbf}\|_2^2 \ .
\end{equation}
More constraints are fixed to be active step by step until rows of $\Abf_D$ are linearly independent when $\ybf$ will be uniquely determined by $\Abf_D^\top \ybf = \cbf_D$.

\paragraph{Linear independence check.}

After the primal and dual push, $(\xbf, \ybf)$ should reach some vertex. All we need is to identify $m - |B|$ columns from $\Abf_{C} = \Abf_{D\setminus B}$ to form an optimal basis with $\Abf_B$. Here we apply LU factorization twice.
We first decompose $\Abf_B$ as
\begin{equation*}
    \Abf_B = \Lbf_B \Ubf_B
    =
    \begin{pmatrix}
        \Lbf_{R_1, B} & \phantom{\zerobf} \\
        \Lbf_{R_2, B} & \Ibf_{m-|B|}
    \end{pmatrix}
    \begin{pmatrix}
        \Ubf \\ \zerobf
    \end{pmatrix},
\end{equation*}
where $(R_1, R_2)$ divides the rows according to the decomposition. The basic candidates become
\begin{equation*}
    \begin{aligned}
        \Abf_{B\cup C}
             & =
            \begin{bmatrix}
                \Abf_{R_1, B} & \Abf_{R_1, C} \\
                \Abf_{R_2, B} & \Abf_{R_2, C}
            \end{bmatrix}
            =
            \begin{bmatrix}
                \begin{pmatrix}
                    \Lbf_{R_1, B} & \phantom{\zerobf} \\
                    \Lbf_{R_2, B} & \Ibf
                \end{pmatrix}
                \begin{pmatrix}
                    \Ubf \\ \zerobf
                \end{pmatrix}
                 &
                \begin{matrix}
                    \Abf_{R_1, C} \\ \Abf_{R_2, C}
                \end{matrix}
            \end{bmatrix}       \\
             & =
            \begin{bmatrix}
                \Lbf_{R_1, B} & \phantom{\zerobf} \\
                \Lbf_{R_2, B} & \Ibf
            \end{bmatrix}
            \begin{bmatrix}
                \begin{matrix}
                    \Ubf \\ \zerobf
                \end{matrix}
                 &
                \begin{pmatrix}
                    \Lbf_{R_1, B} & \phantom{\zerobf} \\
                    \Lbf_{R_2, B} & \Ibf
                \end{pmatrix}^{-1}
                \begin{pmatrix}
                    \Abf_{R_1, C} \\ \Abf_{R_2, C}
                \end{pmatrix}
            \end{bmatrix} \\
             & =
            \begin{bmatrix}
                \Lbf_{R_1, B} & \phantom{\zerobf} \\
                \Lbf_{R_2, B} & \Ibf
            \end{bmatrix}
            \begin{bmatrix}
                \begin{matrix}
                    \Ubf \\ \zerobf
                \end{matrix}
                 &
                \begin{pmatrix}
                    \Lbf_{R_1, B}^{-1}            & \phantom{\zerobf} \\
                    -\Lbf_{R_2, B}\Lbf_{R_1, B}^{-1} & \Ibf
                \end{pmatrix}
                \begin{pmatrix}
                    \Abf_{R_1, C} \\ \Abf_{R_2, C}
                \end{pmatrix}
            \end{bmatrix} \\
             & =
            \begin{bmatrix}
                \Lbf_{R_1, B} & \phantom{\zerobf} \\
                \Lbf_{R_2, B} & \Ibf
            \end{bmatrix}
            \begin{bmatrix}
                \Ubf & \Lbf_{R_1, B}^{-1}\Abf_{R_1, C} \\
                \zerobf & \Abf_{R_2, C} - \Lbf_{R_2, B}\Lbf_{R_1, B}^{-1}\Abf_{R_1, C}
            \end{bmatrix}.
    \end{aligned}
\end{equation*}
Then, we only need to select $m - |B|$ linearly independent columns from $\Abf_{R_2, C} - \Lbf_{R_2, B}\Lbf_{R_1, B}^{-1}\Abf_{R_1, C}$, where the second LU decomposition is involved.

\paragraph{Complexity results.}

Our crossover will succeed w.p. 1 without too many OLS subproblems, as stated in Theorem \ref{thm:ols-number}. Moreover, the number of OLS subproblems is strongly polynomial w.p. 1.

\begin{theorem} \label{thm:ols-number}
    Consider the proposed crossover framework (Algorithm \ref{alg:crossover-frame}). With probability 1, the algorithm terminates and outputs a primal-dual optimal vertex solution with at most $n$ primal and/or dual push steps. Consequently, it requires solving at most $n$ OLS subproblems. 
    
\end{theorem}
The proof is provided in Appendix \ref{apdx:prove-thm-ols-number}.

\subsection{Practical Consideration}

We discuss several practical considerations when implementing our proposed crossover scheme.

\paragraph{Auxiliary LP heuristic.}

To reduce the required number of OLS subproblems, inspired by \citet{ge2025interior}, we adopt auxiliary LPs as heuristic in primal and dual push to sparsify the original solution. In primal push, we build an auxiliary LP with random cost perturbation $\tilde{\cbf}_B$,
\begin{equation} \label{prob:aux-lp}
    \begin{aligned}
        \min\  & \tilde{\cbf}_B^\top \xbf_B \\
        \st\   & \Abf_B \xbf_B = \bbf \\
               & \xbf_B \geq \zerobf,
    \end{aligned}
\end{equation}
which can be solved with PDLP or IPMs. This auxiliary LP has multiple optimal solutions w.p. 0 (see Lemma 1 in \citet{ge2025interior}), so with proper perturbation to avoid the unbounded case, we have the unique optimal solution $\xbf_B$ to \eqref{prob:aux-lp}. Then $(\xbf_B, \zerobf_E)$ can be the primal part of an optimal vertex of \eqref{prob:lp}.

For the auxiliary LP in dual push, we sum up the non-negative dual slacks as a penalty to activate more dual constraints.
\begin{equation} \label{prob:aux-lp-dual}
    \begin{aligned}
        \min\  & \onebf^\top (\cbf_N - \Abf_N^\top \ybf) \\
        \st\   & \Abf_D^\top \ybf = \cbf_D \\
               & \Abf_N^\top \ybf \leq \cbf_N.
    \end{aligned}
\end{equation}

Due to the numerical residuals in practice, we may not correctly fix all the variables on bounds at once. To reduce the negative impact of the issue, the amplitude of disturbance should be controlled, see \citet{mehrotra1991finding, ge2025interior} for more specific discussions.

\paragraph{LP formulation.}

So far, we have built our theory and algorithm design on the standard LP \eqref{prob:lp}. In practice, the general LP formulation \eqref{prob:general-lp} is more convenient and thus widely used.
\begin{equation} \label{prob:general-lp}
    \begin{aligned}
        \min\  & \cbf^\top \xbf \\
        \st\   & \lbf_\wbf \leq \Abf \xbf \leq \ubf_\wbf \\
               & \lbf_\xbf \leq      \xbf \leq \ubf_\xbf.
    \end{aligned}
\end{equation}
To extend the concept of the basis on the general form, we reformulate it as \eqref{prob:reform-lp} via introducing the artificial variable $\wbf$.
\begin{equation} \label{prob:reform-lp}
    \begin{aligned}
        \min\  & \cbf^\top \xbf \\
        \st\   & \Abf \xbf - \wbf = - \delta \onebf \\
               & \lbf_\xbf \leq \xbf \leq \ubf_\xbf \\
               & \lbf_\wbf + \delta \onebf \leq \wbf \leq \ubf_\wbf + \delta \onebf,
    \end{aligned}
\end{equation}
where $\delta = 10$ aims to avoid the right-hand-side term becoming $\zerobf$, as the presence of $\zerobf$ may render the LP highly sensitive to perturbations, causing the optimal solution to change drastically. This reformulation guarantees the constraint matrix $\begin{bmatrix} \Abf & -\Ibf\end{bmatrix}$ to be full row rank, where we can easily apply the definition of the basic solution.

\paragraph{Non-basis identification.}

At the beginning of primal and dual push, we need to determine $B$ and $(D, N)$ to construct the auxiliary LP and OLS. For an IPM solution, which tends to stay away from bounds, the central path and strict complementarity are crucial for identifying non-basic variables. Identification is typically done by comparing the orders of magnitude of $\xbf$ and $\cbf - \Abf^\top\ybf$. In contrast, PDHG can benefit from the sparsity of its solution, induced by the projection, to more aggressively fix variables to their bounds.

We take
\begin{equation*}
    B = \{i: \xbf_i > \max(\gamma (\cbf_i - \Abf_i^\top\ybf), \epsilon) \},
\end{equation*}
and after $B$ is updated in primal push
\begin{equation*}
    \begin{aligned}
        D &= \{i: \cbf_i - \Abf_i^\top\ybf \leq \epsilon \} \cup B \\
        N &= \{i: \cbf_i - \Abf_i^\top\ybf > \epsilon \} \setminus B,
    \end{aligned}
\end{equation*}
with $\gamma = 1$ and $\epsilon = 1e-8$. This criterion works well in our experiments for purifying the PDLP solutions.

\paragraph{Perturbation.}

The cost and right-hand-side perturbations guide the direction of crossover. They are perturbed as
\begin{equation*}
    \begin{aligned}
        \tilde{\cbf}_B &= \frac{1}{\|\cbf_B\|_\infty + 1}\cbf_B + \deltabf_\cbf \\
        \tilde{\bbf} &= \frac{1}{\|\bbf\|_\infty + 1}\bbf + \deltabf_\bbf,
    \end{aligned}
\end{equation*}
where $\deltabf_\cbf, \deltabf_\bbf$ are independently and uniformly distributed over the interval $[0, 1]$. The first normalized terms function as a center to stabilize the disturbance, preventing the unfixed non-basic variables from occasionally escaping boundaries.

\paragraph{Efficiency of OLS.}

Note that to obtain the ray directions of PDHG~\eqref{eq:sp-ray}, if the direct QR factorization is applied in the OLS subproblems \eqref{prob:ols-dual} and \eqref{prob:ols-primal}, we need to factorize $\Abf_B^\top$ and $\Abf_D$ respectively. An alternative practice is to utilize the property of Moore–Penrose inverse that $(\Abf_B^\top \Abf_B)^\dagger \Abf_B^\top = \Abf_B^\dagger = \Abf_B^\top (\Abf_B \Abf_B^\top)^\dagger$. We can solve
\begin{equation} \label{prob:ols-dual-alter}
    \min\ \frac{1}{2}\| \Abf_B \dbf_B - \Abf_B\tilde{\cbf}_B \|_2^2,
\end{equation}
and calculate the primal direction $(\deltabf_\xbf)_B$ as
\begin{equation*}
    (\deltabf_\xbf)_B = - \eta \left[\cbf_B - \Abf_B^\top(\Abf_B \Abf_B^\top)^\dagger \Abf_B \tilde{\cbf}_B\right] = - \eta \left(\cbf_B - \dbf_B\right).
\end{equation*}
Consequently, we can focus solely on the factorization of $\Abf_B$.

Currently, all the OLS problems are solved independently from scratch. Note that the coefficient matrices of two adjacent OLS problems differ in only a few columns or rows; thus, further development on reusing the factorization (for example, via Givens rotations) may largely enhance the efficiency of our crossover to the level of the classic simplex-based crossover.

\paragraph{Understanding solution structures between PDLP and IPMs.}

This crossover scheme can be applied to solutions obtained by IPMs or PDLP. However, their structures differ as discussed below.

IPMs find the analytical center of the optimal solution face, and the obtained solutions tend to be denser, while PDLP solutions are usually much sparser because of the projection operator.

IPMs and PDHG both have a strong ability to distinguish non-basic variables. IPMs can identify the strictly complementary solution pair with a sufficiently small duality gap \citep{andersen1996combining}. PDHG can correctly classify variables as basic and non-basic in the scope of PDHG basis within finite iterations \citep{lu2024geometry}.

Therefore, even though the PDLP solution may be less accurate than the IPM solution, it can still be refined to the vertex successfully in practice.

\section{Experiments} \label{sec:exp}

In this section, we present experimental results to demonstrate the competence of our crossover algorithm. Our experiment does not plan to surpass the state-of-the-art crossover code implemented in commercial solvers but rather to verify the effectiveness of our proposed crossover algorithm. Further practical improvements are necessary to explore performance enhancement, which is beyond the scope of this paper.

\paragraph{Benchmark dataset.}
We conduct our crossover experiment on the NETLIB collection \citep{gay1985electronic}. Considering the efficiency of the experiment, we select 100 instances from NETLIB according to two rules:
\begin{itemize}
    \item The number of rows or columns of $\Abf$ in the general form \eqref{prob:general-lp} is smaller than 5000.
    \item cuPDLP-C \citep{lu2023cupdlpc} can solve it to the relative tolerance of $1e-8$ within 600 seconds.
\end{itemize}

\paragraph{Software and hardware.}
Tests are run on a MacBook Pro with an 8-core Apple M2 CPU and 24GB unified memory. Algorithm \ref{alg:crossover-frame} is implemented in Julia and can be accessed at \href{https://github.com/MIT-Lu-Lab/crossover}{https://github.com/MIT-Lu-Lab/crossover}.

\paragraph{Initialization and time limit.}
We first solve the NETLIB LPs using cuPDLP-C inside COPT 7.2 \citep{copt} without GPU acceleration to the relative tolerance of $1e-8$ or $1e-6$. Then the optimal primal-dual solution is passed to the Julia implementation of our crossover algorithm. A crossover time limit of 300 seconds is set and the random seed is set to 0 for reproducibility.

\paragraph{OLS solving.}
OLS solving to identify the spiral ray direction is a critical step in our crossover algorithm. In our experiments, all OLS subproblems are solved by direct QR factorization.

\paragraph{Auxiliary LP solving.}
The auxiliary LPs in our crossover experiment are solved to the relative tolerance of $1e-8$ by cuPDLP-C without GPU acceleration. The heuristic is disabled by default.

\paragraph{Vertex solutions from crossover.}
From Theorem \ref{thm:ols-number}, our crossover will provide us with a primal-dual optimal vertex solution w.p. 1 if the original solution is primal-dual optimal. In practice, due to the numerical residual of the original solution, the vertex from the crossover may sometimes be only primal (or dual) optimal but dual (or primal) infeasible, or even near-optimal but slightly primal-dual infeasible. Since simplex methods can be applied to efficiently eliminate this infeasibility after crossover, such vertices are acceptable.

\paragraph{Results.}

The complete numerical results of the 100 NETLIB instances are shown in Appendix \ref{apdx:netlib-table}. For the original $1e-8$ solutions, our crossover returns 95 primal-dual optimal vertices, 1 primal optimal vertex, and 4 dual optimal vertices. For the original $1e-6$ solutions, our crossover returns 76 primal-dual optimal vertices, 9 primal optimal vertices, 10 dual optimal vertices, and 5 near-optimal vertices. The inaccuracy of the original solution results in a loss of optimality in the vertex solution produced by the crossover procedure.

Significant enhancement of the solution sparsity is observed in both COPT and our crossover methods, with comparable levels of improvement. In several instances, our crossover performs differently from the classic simplex-based crossover in COPT, implying that our approach has the potential to serve as a viable alternative to the traditional method.

With the auxiliary LP heuristic for original $1e-8$ solutions, our crossover can successfully provide optimal or near-optimal vertex solutions in 93 instances. Compared to the results without the auxiliary LP heuristic, the number of OLS subproblems is significantly reduced, but extra LP solving time is introduced, resulting in a trade-off. Due to the instability of FOMs, the auxiliary LPs take a long time to solve for certain instances, sometimes even causing a timeout. However, despite failing to recover the optimal basis for these problems, the number of support of most primal solutions is still successfully reduced.

\section{Conclusions and Discussions} \label{sec:conclu}
In this paper, we formally derive the spiral behavior of PDHG for solving LP. The spiral behavior can be decomposed into a rotation and a forward movement that are orthogonal to each other. This explicit formula and geometric perspective provide insights into the recent findings of the algorithm, including the infeasibility detection, the two-stage behavior, the local linear rate, etc.

Inspired by the spiral of PDHG, we propose a new randomized crossover approach that is distinct from the traditional crossover approach based on the simplex algorithm. Our crossover moves along the PDHG spiral ray, which can be calculated via solving OLS subproblems. Our numerical experiments demonstrate the effectiveness of the proposed approach.

Looking ahead, several future directions are worth exploring to enhance this crossover scheme. Firstly, we can reuse the factorization in the OLS solving, potentially speeding up the algorithm significantly. One potential enhancement is implementing a warm start for PDHG, which could speed up auxiliary LPs, although a theoretical guarantee for this remains to be established. Another promising direction involves developing a GPU-friendly OLS solver, which would allow the crossover method to leverage GPU parallelization. Additionally, designing a strategic perturbation could direct the solution toward a desired vertex more efficiently. If this perturbation is well-calibrated to avoid making the LP unbounded or infeasible, the auxiliary LP heuristic could enable a purely PDHG-based, matrix-free crossover, making it particularly suitable for handling large-scale LPs.

\section*{Acknowledgments}
This work was started when the first author visited the second author at the University of Chicago, Booth School of Business. The authors also would like to thank Qi Huangfu for helpful discussions on the crossover algorithms in commercial LP solvers. The first author is partially supported by NSFC [Grant NSFC-72225009, 72394360, 72394365]. The second author is partially supported by AFOSR FA9550-24-1-0051.


\bibliographystyle{plainnat}
\bibliography{ref}

\newpage
\appendix

\section{Proof of Theorem \ref{thm:pdlp-spiral}} \label{apdx:prove-thm-pdlp-spiral}

\begin{proof}[Proof of Theorem \ref{thm:pdlp-spiral}.]
For each phase given PDHG the basis $(B, N)$, the PDHG operator can be rewritten as
\begin{equation*}
    \begin{bmatrix}
        \Ibf_{|B|}     &            &        \\
                       & \Ibf_{|N|} &        \\
        2\eta \Abf_B   &            & \Ibf_m
    \end{bmatrix}
    \begin{bmatrix}
        \xbf_B^{(k+1)} \\ \xbf_N^{(k+1)} \\ \ybf^{(k+1)}
    \end{bmatrix}
    =
    \begin{bmatrix}
        \Ibf_{|B|}    &            & \eta \Abf_B^\top \\
                      & \Ibf_{|N|} &                  \\
        \eta \Abf_B   &            & \Ibf_m
    \end{bmatrix}
    \begin{bmatrix}
        \xbf_B^{(k)} \\ \xbf_N^{(k)} \\ \ybf^{(k)}
    \end{bmatrix}
    +
    \begin{bmatrix}
        - \eta \cbf_B \\ \zerobf \\ \eta \bbf
    \end{bmatrix}.
\end{equation*}
Since the non-basic part $\xbf_N$ is always $\zerobf$, we focus on the basic part of PDHG iterates
\begin{equation*}
    \begin{bmatrix}
        \xbf_B^{(k+1)} \\ \ybf^{(k+1)}
    \end{bmatrix}
    =
    \begin{bmatrix}
        \Ibf_{|B|}    & \eta \Abf_B^\top                  \\
        - \eta \Abf_B & \Ibf_m - 2\eta^2 \Abf_B\Abf_B^\top
    \end{bmatrix}
    \begin{bmatrix}
        \xbf_B^{(k)} \\ \ybf^{(k)}
    \end{bmatrix}
    +
    \begin{bmatrix}
        - \eta \cbf_B \\ 2\eta^2\Abf_B\cbf_B + \eta \bbf
    \end{bmatrix}.
\end{equation*}
From \eqref{eq:sp-center} and \eqref{eq:sp-ray}, we have
\begin{equation*}
    \begin{bmatrix}
        \xbf_B^{(k+1)} - (\xbf_\vbf)_B \\ \ybf^{(k+1)} - \ybf_\vbf
    \end{bmatrix}
    =
    \Pbf_B
    \begin{bmatrix}
        \xbf_B^{(k)} - (\xbf_\vbf)_B \\ \ybf^{(k)} - \ybf_\vbf
    \end{bmatrix}
    +
    \begin{bmatrix}
        (\vbf_\xbf)_B \\ \vbf_\ybf
    \end{bmatrix}
\end{equation*}
where $\Pbf_B$ is defined in \eqref{eq:sp-operator}.

Note that
\begin{equation} \label{eq:Pv-v}
    \Pbf_B \vbf_B = \vbf_B, \ \vbf_B^\top \Pbf_B = \vbf_B^\top,
\end{equation}
we have \eqref{eq:iter} by induction.

Suppose we have the SVD decomposition of $\Abf_B$
\begin{equation} \label{eq:A-svd}
    \Abf_B = \Ubf\Sigmabf\Vbf^\top = 
    \begin{bmatrix}
        \Ubf_{r} & \Ubf_{m-r}
    \end{bmatrix}
    \begin{bmatrix}
        \sigma_1 \\
        & \ddots \\
        & & \sigma_r \\
        & & & & &
    \end{bmatrix}
    \begin{bmatrix}
        \Vbf_r & \Vbf_{|B| - r}
    \end{bmatrix}^\top,
\end{equation}
where $\Sigmabf$ has $r = \text{rank}(\Abf_B)$ nonzero diagonal elements, and $\Ubf, \Vbf$ are two unitary matrices. The first $r$ columns of $\Ubf$ and the remaining respectively form the basis of the image of $\Abf_B$ and the kernel of $\Abf_B^\top$. Analogously, the first $r$ columns of $\Vbf$ and the remaining respectively form the basis of the image of $\Abf_B^\top$ and the kernel of $\Abf_B$.
\begin{equation} \label{eq:kernel}
    \Abf_B \Vbf_{|B|-r} = \zerobf, \ \Abf_B^\top \Ubf_{m-r} = \zerobf.
\end{equation}

Replace $\Abf_B$ in \eqref{eq:sp-operator} by its SVD decomposition \eqref{eq:A-svd} and $\Pbf_B^k$ equivalently becomes
\begin{equation*}
    \Pbf_B^k = \begin{bmatrix}
        \Vbf \\
         & \Ubf
    \end{bmatrix}
    \begin{bmatrix}
        \Ibf_{|B|}     & \eta \Sigmabf^\top                         \\
        -\eta \Sigmabf & -2 \eta^2 \Sigmabf\Sigmabf^\top + \Ibf_{m}
    \end{bmatrix}^k
    \begin{bmatrix}
        \Vbf^\top   \\
         & \Ubf^\top
    \end{bmatrix}.
\end{equation*}

We can reorder the middle matrix as
\begin{equation*}
    \begin{bmatrix}
        [1]              &                  &   & & [\eta\sigma_1]                                        \\
                         & (1)              &   & &                         & (\eta\sigma_2)              \\
                         &                  & 1 & &                         &                         &   \\
        [- \eta\sigma_1] &                  &   & & [1 - 2\eta^2\sigma^2_1]                               \\
                         & (- \eta\sigma_2) &   & &                         & (1 - 2\eta^2\sigma^2_2)     \\
                         &                  &   & &                         &                         & 1 \\
    \end{bmatrix}
    \rightarrow
    \begin{bmatrix}
        \begin{bmatrix}
            1              & \eta\sigma_1         \\
            - \eta\sigma_1 & 1 - 2\eta^2\sigma^2_1
        \end{bmatrix}                                                                                 \\
                                                   & \begin{pmatrix}
                                                        1 & \eta\sigma_2                       \\
                                                        - \eta\sigma_2 & 1 - 2\eta^2\sigma^2_2
                                                     \end{pmatrix}                                    \\
                                                   &                                            & 1   \\
                                                   &                                            & & 1 \\
    \end{bmatrix}
\end{equation*}
to obtain several $2\times 2$ blocks and contingent ones on the diagonal. Moreover, since $\Abf_B$ is a submatrix of $\Abf$ and $\eta\|\Abf\|_2 < 1$, we still have
\begin{equation*}
    0 < \eta\sigma_i \leq \eta\|\Abf_B\|_2 \leq \eta\|\Abf\|_2 < 1, \ i = 1,\cdots,r.
\end{equation*}

For one $2\times2$ block $\Jbf = \begin{bmatrix}
        1            & \eta \sigma         \\
        -\eta \sigma & 1 - 2\eta^2\sigma^2
\end{bmatrix}$ with $\eta \sigma \in (0, 1)$, it is easy to prove that $\lim_{k\to+\infty}\Jbf^k = \zerobf$ by verifying that the modulus of its eigenvalues is strictly less than 1. Therefore, we have
\begin{equation} \label{eq:lim-Pk}
    \lim_{k\to\infty}\Pbf_B^k = 
    \begin{bmatrix}
        \Vbf_{|B|-r} \Vbf_{|B|-r}^\top \\
        & \Ubf_{m-r} \Ubf_{m-r}^\top
    \end{bmatrix}.
\end{equation}

For the vector $\zbf^{(0)} - \zbf_\vbf$, we have
\begin{equation*}
    \begin{aligned}
        \xbf_B^{(0)} &= \left[\Ibf - \Abf_B^\top(\Abf_B\Abf_B^\top)^\dagger\Abf_B\right]\xbf_B^{(0)} + \Abf_B^\top(\Abf_B\Abf_B^\top)^\dagger\Abf_B\xbf_B^{(0)} \\
        &= \proj_{\Abf_B\xbf = \zerobf}(\xbf_B^{(0)}) + \Abf_B^\top(\Abf_B\Abf_B^\top)^\dagger\Abf_B\xbf_B^{(0)}.
    \end{aligned}
\end{equation*}
and
\begin{equation*}
    \begin{aligned}
        (\xbf_\vbf)_B &= (\Abf_B^\top \Abf_B)^\dagger \Abf_B^\top \bbf + \proj_{\Abf_B\xbf = \zerobf}(\xbf_B^{(0)}) \\
        &= \Abf_B^\top (\Abf_B \Abf_B^\top)^\dagger \bbf + \proj_{\Abf_B\xbf = \zerobf}(\xbf_B^{(0)}).
    \end{aligned}
\end{equation*}
Here we utilize the property of Moore–Penrose inverse that $(\Abf_B^\top \Abf_B)^\dagger \Abf_B^\top = \Abf_B^\dagger = \Abf_B^\top (\Abf_B \Abf_B^\top)^\dagger$. Then we have
\begin{equation*}
    \begin{aligned}
        \xbf_B^{(0)} - (\xbf_\vbf)_B &= \Abf_B^\top (\Abf_B \Abf_B^\top)^\dagger (\Abf_B\xbf_B^{(0)} - \bbf) \\
        \ybf^{(0)} - \ybf_\vbf &= \Abf_B (\Abf_B^\top\Abf_B)^\dagger (\Abf_B^\top\ybf^{(0)} - \cbf_B),
    \end{aligned}
\end{equation*}
where the $\ybf^{(0)} - \ybf_\vbf$ part can be verified in a similar way.

To prove \eqref{eq:sp-cond-lim}, from \eqref{eq:lim-Pk} we have
\begin{equation} \label{eq:lim}
    \lim_{k\to \infty} \Pbf_B^k
    \begin{bmatrix}
        \xbf_B^{(0)} - (\xbf_\vbf)_B \\ \ybf^{(0)} - \ybf_\vbf
    \end{bmatrix} \\
    = 
    \begin{bmatrix}
        \Vbf_{|B|-r} \Vbf_{|B|-r}^\top \Abf_B^\top (\Abf_B \Abf_B^\top)^\dagger (\Abf_B\xbf_B^{(0)} - \bbf) \\
        \Ubf_{m-r} \Ubf_{m-r}^\top \Abf_B (\Abf_B^\top\Abf_B)^\dagger (\Abf_B^\top\ybf^{(0)} - \cbf_B)
    \end{bmatrix}
    = \zerobf,
\end{equation}
where the second equation comes from \eqref{eq:kernel}.

To prove \eqref{eq:sp-cond-orth}, for all $k$, we use \eqref{eq:Pv-v} to obtain
\begin{equation*}
    \begin{aligned}
          & \vbf_B^\top \Pbf_B^k
          \begin{bmatrix}
            \xbf_B^{(0)} - (\xbf_\vbf)_B \\ \ybf^{(0)} - \ybf_\vbf
          \end{bmatrix}
        = 
        \vbf_B^\top
        \begin{bmatrix}
            \xbf_B^{(0)} - (\xbf_\vbf)_B \\ \ybf^{(0)} - \ybf_\vbf
        \end{bmatrix} \\
        = &
        \begin{bmatrix}
            - \eta \left[\cbf_B - \Abf_B^\top(\Abf_B \Abf_B^\top)^\dagger \Abf_B \cbf_B\right] \\
            \eta \left[\bbf - \Abf_B(\Abf_B^\top \Abf_B)^\dagger \Abf_B^\top \bbf\right]
        \end{bmatrix}^\top
        \begin{bmatrix}
            \Abf_B^\top (\Abf_B \Abf_B^\top)^\dagger (\Abf_B\xbf_B^{(0)} - \bbf) \\
            \Abf_B (\Abf_B^\top\Abf_B)^\dagger (\Abf_B^\top\ybf^{(0)} - \cbf_B)
        \end{bmatrix} \\
        = &
        \begin{bmatrix}
            \eta \cbf_B \\
            - \eta \bbf
        \end{bmatrix}^\top
        \begin{bmatrix}
            \Ibf_{|B|} - \Abf_B^\top(\Abf_B \Abf_B^\top)^\dagger \Abf_B \\
                                                                    & \Ibf_{m} - \Abf_B(\Abf_B^\top \Abf_B)^\dagger \Abf_B^\top
        \end{bmatrix}
        \begin{bmatrix}
            \Abf_B^\top (\Abf_B \Abf_B^\top)^\dagger (\Abf_B\xbf_B^{(0)} - \bbf) \\
            \Abf_B (\Abf_B^\top\Abf_B)^\dagger (\Abf_B^\top\ybf^{(0)} - \cbf_B)
        \end{bmatrix} \\
        = &
        \begin{bmatrix}
            \eta \cbf_B \\
            - \eta \bbf
        \end{bmatrix}^\top
        \begin{bmatrix}
            \Abf_B^\top - \Abf_B^\top(\Abf_B^\top)^\dagger \Abf_B^\top \\
                                                                    & \Abf_B - \Abf_B\Abf_B^\dagger \Abf_B
        \end{bmatrix}
        \begin{bmatrix}
            (\Abf_B \Abf_B^\top)^\dagger (\Abf_B\xbf_B^{(0)} - \bbf) \\
            (\Abf_B^\top\Abf_B)^\dagger (\Abf_B^\top\ybf^{(0)} - \cbf_B)
        \end{bmatrix} \\
        = & \zerobf,
    \end{aligned}
\end{equation*}
where the last equation utilizes the property of Moore–Penrose inverse that $\Abf_B = \Abf_B\Abf_B^\dagger \Abf_B$.
\end{proof}

\section{Proof of Proposition \ref{prop:opt-improve}} \label{apdx:prove-prop-opt-improve}
\begin{proof}[Proof of Proposition \ref{prop:opt-improve}.]

From \eqref{eq:sp-center}, we have
\begin{equation} \label{eq:z_v}
    \begin{aligned}
        \Abf_B^\top \Abf_B (\xbf_\vbf)_B & = \Abf_B^\top \bbf \\
        \Abf_B \Abf_B^\top \ybf_\vbf     & = \Abf_B \cbf_B.
    \end{aligned}
\end{equation}
$(\xbf_\vbf)_B$ is the optimal solution to the OLS problem $\min \frac{1}{2}\|\Abf_B \xbf_B - \bbf\|_2^2$, while $\ybf_\vbf$ is the optimal solution to the OLS problem $\min \frac{1}{2}\|\Abf_B^\top \ybf - \cbf_B\|_2^2$.

From \eqref{eq:sp-ray}, we have
\begin{equation} \label{eq:idv}
    \begin{aligned}
        \Abf_B(\vbf_\xbf)_B  & = \zerobf \\
        \Abf_B^\top\vbf_\ybf & = \zerobf,
    \end{aligned}
\end{equation}
implying that the forward movement will not improve the primal or dual feasibility.

Suppose we have the SVD decomposition of $\Abf_B$ \eqref{eq:A-svd} and let $\Sigmabf_r$ denote the $r\times r$ nonzero-diagonal matrix in $\Sigmabf$, then
\begin{equation*}
    \begin{aligned}
        \cbf^\top \vbf_\xbf &= - \eta \cbf_B^\top\left[\cbf_B - \Abf_B^\top(\Abf_B \Abf_B^\top)^\dagger \Abf_B \cbf_B\right] \\
        &= - \eta \cbf_B^\top \left[ \Ibf_{|B|} - \Abf_B^\top(\Abf_B \Abf_B^\top)^\dagger \Abf_B \right] \cbf_B \\
        &= - \eta \cbf_B^\top \left[ \Ibf_{|B|} - \Vbf\Sigmabf^\top\Ubf^\top(\Ubf\Sigmabf\Sigmabf^\top \Ubf^\top)^\dagger \Ubf\Sigmabf\Vbf^\top \right] \cbf_B \\
        &= - \eta \cbf_B^\top \left[ \Ibf_{|B|} - \Vbf\Sigmabf^\top (\Sigmabf\Sigmabf^\top)^\dagger \Sigmabf\Vbf^\top \right] \cbf_B \\
        &= - \eta \cbf_B^\top \left[ \Ibf_{|B|} - \Vbf_r\Sigmabf_r \Sigmabf_r^{-2} \Sigmabf_r\Vbf_r^\top \right] \cbf_B \\
        &= - \eta \cbf_B^\top \left[ \Ibf_{|B|} - \Vbf_r\Vbf_r^\top \right] \cbf_B \\
        &= - \eta \cbf_B^\top \left[ \Vbf_{|B|-r}\Vbf_{|B|-r}^\top \right] \cbf_B \\
        &\leq 0,
    \end{aligned}
\end{equation*}
and similarly
\begin{equation*}
    \bbf^\top \vbf_\ybf = \eta \bbf^\top \left[\Ubf_{m-r}\Ubf_{m-r}^\top \right] \bbf \geq 0.
\end{equation*}
The last inequality of $\cbf^\top\vbf_\xbf \leq 0$ is activated if and only if $\Vbf_{|B|-r}^\top\cbf_B=\zerobf$, i.e., $\cbf_B$ belongs to the image of $\Abf_B^\top$. From \eqref{eq:sp-ray}, $\cbf_B$ belongs to the image of $\Abf_B^\top$ if and only if $(\vbf_\xbf)_B$ belongs to the image of $\Abf_B^\top$. Moreover, \eqref{eq:idv} tells us $(\vbf_\xbf)_B$ belongs to the kernel of $\Abf_B$, so $(\vbf_\xbf)_B$ belongs to the image of $\Abf_B^\top$ if and only if $(\vbf_\xbf)_B = \zerobf$.
Similarly, $\bbf^\top \vbf_\ybf \geq 0$ is activated if and only if $\Ubf_{|B|-r}^\top\bbf=\zerobf$, i.e., $\bbf$ belongs to the image of $\Abf_B$, or if and only if $\vbf_\ybf = \zerobf$. Combining the primal and dual parts completes the proof.
\end{proof}

\section{Proof of Proposition \ref{prop:fix-lp}} \phantomsection\label{apdx:prove-prop-fix-lp}
\begin{proof}[Proof of Proposition \ref{prop:fix-lp}.]
    The KKT condition of \eqref{prob:lp} is
    \begin{equation} \label{eq:lp-optimal}
        \begin{aligned}
            & \Abf \xbf = \bbf, \ \xbf \geq \zerobf \\
            & \cbf - \Abf^\top \ybf \geq \zerobf \\
            & \cbf^\top \xbf - \bbf^\top \ybf = 0.
        \end{aligned}
    \end{equation}
    Since $(\xbf,\ybf)$ is optimal, we have $B \subseteq D$ and $N \subseteq E$. The duality gap of $(\tilde{\xbf}, \tilde{\ybf})$ is
    \begin{equation*}
        \cbf^\top \tilde{\xbf} - \bbf^\top \tilde{\ybf} = \cbf_B^\top \tilde{\xbf}_B - \tilde{\xbf}_B^\top \Abf_B^\top \tilde{\ybf} = \cbf_D^\top \tilde{\xbf}_D - \tilde{\xbf}_D ^\top \Abf_D^\top \tilde{\ybf} = 0.
    \end{equation*}
    The last equality is from $\cbf_D - \Abf_D^\top \tilde{\ybf} = \zerobf$.
    Together with \eqref{eq:fix-lp}, $(\tilde{\xbf}, \tilde{\ybf})$ satisfies \eqref{eq:lp-optimal}.
\end{proof}

\section{Proof of Proposition \ref{prop:feas-point}} \phantomsection\label{apdx:prove-prop-feas-point}
\begin{proof}[Proof of Proposition \ref{prop:feas-point}.]
    To prove $\xbf_\vbf = \xbf^{(0)}$ if $\Abf_B\xbf^{(0)}_B = \bbf$, we have $(\xbf_\vbf)_N = \xbf^{(0)}_N = \zerobf$ and
    \begin{equation*}
        \begin{aligned}
            (\xbf_\vbf)_B &= (\Abf_B^\top \Abf_B)^\dagger \Abf_B^\top \bbf + \proj_{\Abf_B\xbf = \zerobf}(\xbf_B^{(0)}) \\
            &= \Abf_B^\top (\Abf_B \Abf_B^\top)^\dagger \bbf + \proj_{\Abf_B\xbf = \zerobf}(\xbf_B^{(0)}) \\
            &= \Abf_B^\top (\Abf_B \Abf_B^\top)^\dagger \Abf_B \xbf^{(0)}_B + \left[ \Ibf - \Abf_B^\top (\Abf_B \Abf_B^\top)^\dagger \Abf_B \right] \xbf^{(0)}_B \\
            &= \xbf^{(0)}_B.
        \end{aligned}
    \end{equation*}
    The second equation utilizes the property of Moore–Penrose inverse that $(\Abf_B^\top \Abf_B)^\dagger \Abf_B^\top = \Abf_B^\dagger = \Abf_B^\top (\Abf_B \Abf_B^\top)^\dagger$. Analogously, if $\Abf_B^\top \ybf^{(0)} = \cbf_B$, we also have $\ybf_\vbf = \ybf^{(0)}$.
\end{proof}

\section{Proof of Theorem \ref{thm:ols-number}} \phantomsection\label{apdx:prove-thm-ols-number}

\begin{proof}[Proof of Theorem \ref{thm:ols-number}.]
For the primal push phase, at the beginning of each primal push, we have $B = \{i:\xbf_i > 0\}$ and then
\[
    |B| \geq \text{rank}(\Abf_B)
\]
because the rank of a matrix will not exceed the number of its columns. Moreover, since $\Abf$ is full row rank, there exist $m - \text{rank}(\Abf_B)$ columns in $\Abf$ that are linearly independent of columns in $\Abf_B$, and thus
\[
    |B| \leq n - [m - \text{rank}(\Abf_B)].
\]
Together we have
\begin{equation} \label{eq:B-rk}
     0 \leq |B| - \text{rank}(\Abf_B) \leq n - m.
\end{equation}
Similar to Appendix \ref{apdx:prove-prop-opt-improve}, we have SVD decomposition of $\Abf_B$ \eqref{eq:A-svd} and
\[
\begin{aligned}
    (\deltabf_\xbf)_B & = - \left[ \tilde{\cbf}_B - \Abf_B^\top (\Abf_B\Abf_B^\top)^\dagger\Abf_B\tilde{\cbf}_B \right] \\
    & = - \left[ \Ibf_{|B|} - \Abf_B^\top (\Abf_B\Abf_B^\top)^\dagger\Abf_B \right] \tilde{\cbf}_B \\
    & = - \left[ \Vbf_{|B|-r}\Vbf_{|B|-r}^\top \right] \tilde{\cbf}_B,
\end{aligned}
\]
where $r = \text{rank}(\Abf_B)$. Since we randomly perturb the cost vector $\tilde{\cbf}_B$, we have $(\deltabf_\xbf)_B \neq \zerobf$ w.p. 1 if and only if $|B| > \text{rank}(\Abf_B)$. If $(\deltabf_\xbf)_B = \zerobf$, the primal push phase is over. Otherwise, if $(\deltabf_\xbf)_B \geq \zerobf$, we can replace $(\deltabf_\xbf)_B$ by $-(\deltabf_\xbf)_B$ to ensure that the moving direction has a strictly negative component.

Now consider the case when $(\deltabf_\xbf)_B \neq \zerobf$ has a strictly negative component. For $\theta = \min_{\{i\in B: (\deltabf_\xbf)_i < 0\}}\{-\frac{\xbf_i}{(\deltabf_\xbf)_i}\}$ and all $j\in J = \argmin_{\{i\in B: (\deltabf_\xbf)_i < 0\}}\{-\frac{\xbf_i}{(\deltabf_\xbf)_i}\}$, we have $[\xbf_B + \theta (\deltabf_\xbf)_B]_j=0$, which indicates that these columns will be removed from $\Abf_B$. Since $\Abf_B (\deltabf_\xbf)_B = \zerobf$ and $(\deltabf_\xbf)_j < 0$ for $j\in J$, at least one column to be removed is linearly dependent on the other columns, and thus each primal push will decrease $|B| - \text{rank}(\Abf_B)$ by at least 1. From \eqref{eq:B-rk}, primal push will terminate within $n-m$ steps w.p. 1, which also means at most $n-m$ OLS subproblems are required in the primal push phase.

For the dual push phase, we set $D = \{i: \cbf_i - \Abf_i^\top \ybf = 0\}$. Similarly, we have $\text{rank}(\Abf_D)\leq m$ and $\text{rank}(\Abf_D)$ strictly increases by at least 1 in each dual push. Thus, dual push will terminate within $m$ steps w.p. 1, which also means at most $m$ OLS subproblems are required in the dual push phase.

Therefore, our crossover requires a total of $(n-m)+m = n$ OLS subproblems.
\end{proof}

\section{Numerical Results} \phantomsection\label{apdx:netlib-table}

The numerical results for PDLP solutions with accuracy of $1e-8$ are shown in Table \ref{tbl:netlib-1e-8}. ``nRows'' and ``nCols'' are the number of rows and columns of $\Abf$ in \eqref{prob:general-lp}. ``\# supp. PDLP'' is the number of support (i.e., components not on their bounds) of the initial solution, ``\# supp. COPT'' and ``\# supp. cross'' are the numbers of support of vertices from the crossover algorithm in COPT commercial solver and our crossover approach. We also report the total crossover time, along with the runtimes of different components, in the ``time (sec)'' columns. ``nOLS'' columns are the number of OLS subproblems in the primal push and dual push phase of the crossover.

For robustness comparison, we also provide the crossover results for PDLP solutions with moderate accuracy of $1e-6$ in Table \ref{tbl:netlib-1e-6}. The results with the auxiliary LP heuristic for original $1e-8$ solutions are presented in Table \ref{tbl:netlib-lp-1e-8}.

{\small
\begin{longtable}{ |c|c|c|c|c|c|c|c|c|c|c|c| }

\toprule
 \multirow{2}{*}{prob} & \multirow{2}{*}{nRows} & \multirow{2}{*}{nCols} & \multicolumn{3}{c|}{\# supp.} & \multicolumn{4}{c|}{time (sec)} & \multicolumn{2}{c|}{nOLS} \\
\cline{4-12}
 & & & PDLP & COPT & cross & cross & primal & dual & lin. ind. & primal & dual \\
\midrule

\endfirsthead

\toprule
 \multirow{2}{*}{prob} & \multirow{2}{*}{nRows} & \multirow{2}{*}{nCols} & \multicolumn{3}{c|}{\# supp.} & \multicolumn{4}{c|}{time (sec)} & \multicolumn{2}{c|}{nOLS} \\
\cline{4-12}
 & & & PDLP & COPT & cross & cross & primal & dual & lin. ind. & primal & dual \\
\midrule

\endhead
\endfoot
25fv47 & 821 & 1571 & 600 & 583 & 584 & 0.20 & 0.03 & 0.07 & 0.03 & 17 & 50 \\
80bau3b & 2262 & 9799 & 1851 & 1758 & 1752 & \textcolor{blue}{1.22d} & 0.15 & 0.34 & 0.73 & 86 & 205 \\
adlittle & 56 & 97 & 61 & 45 & 45 & 0.01 & 0.01 & 0.00 & 0.00 & 17 & 2 \\
afiro & 27 & 32 & 14 & 13 & 13 & 0.00 & 0.00 & 0.00 & 0.00 & 2 & 2 \\
agg & 488 & 163 & 62 & 57 & 57 & 0.01 & 0.00 & 0.00 & 0.00 & 6 & 2 \\
agg2 & 516 & 302 & 141 & 120 & 121 & 0.02 & 0.01 & 0.00 & 0.00 & 26 & 4 \\
agg3 & 516 & 302 & 144 & 124 & 124 & 0.01 & 0.01 & 0.00 & 0.00 & 26 & 2 \\
bandm & 305 & 472 & 306 & 294 & 294 & 0.01 & 0.00 & 0.00 & 0.00 & 2 & 2 \\
beaconfd & 173 & 262 & 89 & 89 & 89 & 0.00 & 0.00 & 0.00 & 0.00 & 2 & 2 \\
blend & 74 & 83 & 56 & 54 & 54 & 0.00 & 0.00 & 0.00 & 0.00 & 3 & 3 \\
bnl1 & 643 & 1175 & 689 & 451 & 449 & 0.34 & 0.31 & 0.01 & 0.02 & 230 & 5 \\
bnl2 & 2324 & 3489 & 1519 & 1171 & 1164 & 1.92 & 0.74 & 0.17 & 1.01 & 358 & 78 \\
boeing1 & 351 & 384 & 207 & 196 & 195 & 0.02 & 0.01 & 0.01 & 0.00 & 15 & 26 \\
boeing2 & 166 & 143 & 69 & 55 & 57 & 0.01 & 0.00 & 0.01 & 0.00 & 14 & 30 \\
bore3d & 233 & 315 & 126 & 126 & 126 & 0.00 & 0.00 & 0.00 & 0.00 & 2 & 2 \\
brandy & 220 & 249 & 135 & 134 & 134 & 0.01 & 0.00 & 0.00 & 0.00 & 2 & 2 \\
capri & 271 & 353 & 246 & 220 & 220 & 0.02 & 0.02 & 0.00 & 0.00 & 35 & 8 \\
cre-a & 3516 & 4067 & 579 & 492 & 491 & 11.30 & 0.16 & 10.27 & 0.86 & 112 & 1026 \\
cre-c & 3068 & 3678 & 590 & 508 & 504 & 9.00 & 0.15 & 8.08 & 0.77 & 99 & 766 \\
cycle & 1903 & 2857 & 994 & 224 & 445 & 1.04 & 0.34 & 0.45 & 0.25 & 401 & 384 \\
czprob & 929 & 3523 & 924 & 866 & 866 & 0.12 & 0.09 & 0.00 & 0.03 & 59 & 2 \\
d2q06c & 2171 & 5167 & 1612 & 1543 & 1543 & 1.54 & 0.32 & 0.56 & 0.66 & 31 & 121 \\
d6cube & 415 & 6184 & 136 & 115 & 110 & 0.58 & 0.02 & 0.54 & 0.01 & 21 & 284 \\
degen2 & 444 & 534 & 207 & 207 & 207 & 0.28 & 0.00 & 0.25 & 0.03 & 2 & 206 \\
degen3 & 1503 & 1818 & 640 & 637 & 638 & 8.14 & 0.01 & 7.33 & 0.79 & 3 & 776 \\
e226 & 223 & 282 & 127 & 127 & 127 & 0.01 & 0.00 & 0.00 & 0.00 & 2 & 5 \\
etamacro & 400 & 688 & 292 & 271 & 272 & 0.03 & 0.01 & 0.02 & 0.01 & 22 & 50 \\
fffff800 & 524 & 854 & 357 & 312 & 307 & 0.04 & 0.02 & 0.01 & 0.01 & 52 & 36 \\
finnis & 497 & 614 & 259 & 227 & 230 & 0.05 & 0.02 & 0.02 & 0.01 & 48 & 54 \\
fit1d & 24 & 1026 & 12 & 12 & 12 & 0.00 & 0.00 & 0.00 & 0.00 & 2 & 0 \\
fit1p & 627 & 1677 & 634 & 627 & 627 & 0.00 & 0.00 & 0.00 & 0.00 & 2 & 0 \\
fit2d & 25 & 10500 & 22 & 20 & 20 & 0.01 & 0.00 & 0.00 & 0.00 & 3 & 0 \\
fit2p & 3000 & 13525 & 3004 & 2997 & 2997 & 0.04 & 0.00 & 0.01 & 0.03 & 2 & 4 \\
forplan & 161 & 421 & 83 & 83 & 83 & 0.01 & 0.00 & 0.01 & 0.00 & 2 & 25 \\
ganges & 1309 & 1681 & 1278 & 1175 & 1174 & 0.18 & 0.08 & 0.00 & 0.09 & 103 & 6 \\
gfrd-pnc & 616 & 1092 & 349 & 336 & 336 & 0.09 & 0.00 & 0.06 & 0.02 & 2 & 125 \\
greenbeb & 2392 & 5405 & 1048 & 937 & 936 & 2.77 & 0.25 & 1.47 & 1.04 & 104 & 404 \\
grow15 & 300 & 645 & 533 & 299 & 300 & 0.24 & 0.24 & 0.00 & 0.00 & 234 & 0 \\
grow22 & 440 & 946 & 849 & 440 & 440 & 0.62 & 0.62 & 0.00 & 0.00 & 410 & 0 \\
grow7 & 140 & 301 & 237 & 140 & 140 & 0.05 & 0.05 & 0.00 & 0.00 & 98 & 0 \\
israel & 174 & 142 & 80 & 70 & 69 & 0.01 & 0.01 & 0.00 & 0.00 & 20 & 0 \\
kb2 & 43 & 41 & 27 & 27 & 27 & 0.00 & 0.00 & 0.00 & 0.00 & 2 & 0 \\
ken-07 & 2426 & 3602 & 2236 & 2234 & 2234 & 1.27 & 0.00 & 0.05 & 1.22 & 2 & 65 \\
lotfi & 153 & 308 & 126 & 99 & 99 & 0.01 & 0.01 & 0.00 & 0.00 & 29 & 2 \\
maros-r7 & 3136 & 9408 & 3136 & 3136 & 3136 & 0.09 & 0.06 & 0.00 & 0.01 & 2 & 0 \\
maros & 846 & 1443 & 345 & 340 & 339 & 0.32 & 0.01 & 0.27 & 0.05 & 7 & 162 \\
modszk1 & 687 & 1620 & 666 & 666 & 666 & 0.09 & 0.02 & 0.03 & 0.03 & 2 & 20 \\
nesm & 662 & 2923 & 726 & 550 & 543 & 0.26 & 0.25 & 0.00 & 0.00 & 188 & 2 \\
osa-07 & 1118 & 23949 & 357 & 355 & 355 & 0.05 & 0.01 & 0.01 & 0.02 & 8 & 6 \\
osa-14 & 2337 & 52460 & 873 & 781 & 781 & 0.52 & 0.35 & 0.03 & 0.12 & 93 & 6 \\
osa-30 & 4350 & 100024 & 1733 & 1536 & 1536 & 2.00 & 1.47 & 0.06 & 0.46 & 207 & 6 \\
pds-02 & 2953 & 7535 & 1379 & 332 & 331 & 1.68 & 0.79 & 0.38 & 0.51 & 432 & 396 \\
perold & 625 & 1376 & 580 & 546 & 549 & \textcolor{blue}{0.16d} & 0.08 & 0.05 & 0.02 & 38 & 27 \\
pilot.ja & 940 & 1988 & 789 & 681 & 678 & \textcolor{blue}{0.58d} & 0.40 & 0.09 & 0.09 & 92 & 30 \\
pilot & 1441 & 3652 & 1299 & 1287 & 1282 & \textcolor{blue}{0.58p} & 0.28 & 0.09 & 0.20 & 22 & 8 \\
pilot4 & 410 & 1000 & 374 & 364 & 364 & 0.04 & 0.02 & 0.01 & 0.01 & 14 & 7 \\
pilot87 & 2030 & 4883 & 1856 & 1840 & 1840 & 1.10 & 0.45 & 0.31 & 0.31 & 17 & 15 \\
pilotnov & 975 & 2172 & 1809 & 708 & 650 & \textcolor{blue}{1.83d} & 1.71 & 0.00 & 0.12 & 1163 & 2 \\
qap12 & 3192 & 8856 & 2940 & 2462 & 2466 & 66.60 & 32.42 & 8.85 & 25.33 & 435 & 216 \\
qap8 & 912 & 1632 & 656 & 466 & 442 & 0.82 & 0.26 & 0.20 & 0.36 & 56 & 112 \\
recipe & 91 & 180 & 24 & 24 & 24 & 0.00 & 0.00 & 0.00 & 0.00 & 2 & 5 \\
sc105 & 105 & 103 & 85 & 85 & 85 & 0.00 & 0.00 & 0.00 & 0.00 & 2 & 6 \\
sc205 & 205 & 203 & 184 & 184 & 184 & 0.01 & 0.00 & 0.00 & 0.00 & 2 & 6 \\
sc50a & 50 & 48 & 42 & 42 & 42 & 0.00 & 0.00 & 0.00 & 0.00 & 2 & 2 \\
sc50b & 50 & 48 & 48 & 48 & 48 & 0.01 & 0.00 & 0.00 & 0.01 & 2 & 2 \\
scagr25 & 471 & 500 & 317 & 307 & 307 & 0.01 & 0.00 & 0.00 & 0.00 & 2 & 2 \\
scagr7 & 129 & 140 & 98 & 97 & 97 & 0.00 & 0.00 & 0.00 & 0.00 & 2 & 0 \\
scfxm1 & 330 & 457 & 248 & 231 & 231 & 0.02 & 0.01 & 0.01 & 0.00 & 13 & 21 \\
scfxm2 & 660 & 914 & 501 & 473 & 471 & 0.05 & 0.02 & 0.02 & 0.01 & 24 & 27 \\
scfxm3 & 990 & 1371 & 754 & 711 & 711 & 0.11 & 0.04 & 0.03 & 0.04 & 35 & 33 \\
scorpion & 388 & 358 & 245 & 245 & 245 & 0.01 & 0.00 & 0.00 & 0.01 & 2 & 2 \\
scrs8 & 490 & 1169 & 276 & 276 & 276 & 0.02 & 0.00 & 0.00 & 0.01 & 2 & 10 \\
scsd1 & 77 & 760 & 31 & 7 & 12 & 0.02 & 0.00 & 0.01 & 0.00 & 13 & 60 \\
scsd6 & 147 & 1350 & 182 & 63 & 66 & 0.06 & 0.04 & 0.01 & 0.00 & 85 & 52 \\
scsd8 & 397 & 2750 & 551 & 147 & 354 & 0.14 & 0.12 & 0.01 & 0.00 & 175 & 23 \\
sctap1 & 300 & 480 & 263 & 164 & 164 & 0.07 & 0.05 & 0.01 & 0.00 & 102 & 46 \\
sctap2 & 1090 & 1880 & 811 & 562 & 561 & 0.26 & 0.11 & 0.06 & 0.09 & 231 & 143 \\
sctap3 & 1480 & 2480 & 1038 & 731 & 730 & 0.51 & 0.16 & 0.13 & 0.21 & 279 & 270 \\
seba & 515 & 1028 & 438 & 438 & 438 & 0.00 & 0.00 & 0.00 & 0.00 & 2 & 2 \\
share1b & 117 & 225 & 95 & 94 & 94 & 0.00 & 0.00 & 0.00 & 0.00 & 2 & 0 \\
share2b & 96 & 79 & 52 & 48 & 48 & 0.01 & 0.00 & 0.00 & 0.00 & 7 & 11 \\
shell & 536 & 1775 & 391 & 383 & 383 & 0.03 & 0.00 & 0.01 & 0.02 & 2 & 48 \\
ship04l & 402 & 2118 & 261 & 260 & 260 & 0.01 & 0.00 & 0.01 & 0.01 & 2 & 24 \\
ship04s & 402 & 1458 & 281 & 280 & 280 & 0.02 & 0.00 & 0.01 & 0.01 & 2 & 16 \\
ship08l & 778 & 4283 & 422 & 422 & 422 & 0.08 & 0.00 & 0.03 & 0.05 & 2 & 34 \\
ship08s & 778 & 2387 & 447 & 447 & 447 & 0.06 & 0.00 & 0.01 & 0.05 & 2 & 22 \\
ship12l & 1151 & 5427 & 707 & 706 & 706 & 0.22 & 0.01 & 0.04 & 0.16 & 2 & 57 \\
ship12s & 1151 & 2763 & 728 & 728 & 728 & 0.19 & 0.00 & 0.03 & 0.16 & 2 & 47 \\
sierra & 1227 & 2036 & 373 & 361 & 361 & 0.18 & 0.00 & 0.04 & 0.14 & 5 & 92 \\
stair & 356 & 467 & 350 & 349 & 349 & 0.01 & 0.00 & 0.00 & 0.00 & 2 & 2 \\
standata & 359 & 1075 & 71 & 50 & 50 & 0.05 & 0.00 & 0.04 & 0.00 & 10 & 146 \\
standgub & 361 & 1184 & 71 & 50 & 50 & 0.04 & 0.00 & 0.03 & 0.00 & 10 & 146 \\
standmps & 467 & 1075 & 190 & 174 & 174 & 0.05 & 0.01 & 0.05 & 0.00 & 15 & 144 \\
stocfor1 & 117 & 111 & 69 & 69 & 69 & 0.00 & 0.00 & 0.00 & 0.00 & 2 & 3 \\
stocfor2 & 2157 & 2031 & 1267 & 1267 & 1267 & 0.85 & 0.00 & 0.13 & 0.72 & 2 & 123 \\
truss & 1000 & 8806 & 802 & 691 & 698 & 0.87 & 0.18 & 0.56 & 0.13 & 102 & 300 \\
tuff & 333 & 587 & 165 & 122 & 122 & 0.04 & 0.02 & 0.02 & 0.00 & 43 & 60 \\
vtp.base & 198 & 203 & 55 & 55 & 55 & 0.00 & 0.00 & 0.00 & 0.00 & 2 & 2 \\
wood1p & 244 & 2594 & 39 & 39 & 39 & 0.14 & 0.00 & 0.14 & 0.00 & 2 & 134 \\
woodw & 1098 & 8405 & 696 & 550 & 550 & 0.60 & 0.27 & 0.19 & 0.13 & 146 & 81 \\

\bottomrule
\caption{Crossover results ($1e-8$ solutions) for the 100 NETLIB instances. ``p'' means the basis is only primal optimal (dual infeasible). ``d'' means the basis is only dual optimal (primal feasible). ``a'' means the basis is neither primal nor dual infeasible. All OLS subproblems are solved by direct QR factorization.} \label{tbl:netlib-1e-8}
\end{longtable}
}

{\small
\begin{longtable}{ |c|c|c|c|c|c|c|c|c|c|c|c| }

\toprule
 \multirow{2}{*}{prob} & \multirow{2}{*}{nRows} & \multirow{2}{*}{nCols} & \multicolumn{3}{c|}{\# supp.} & \multicolumn{4}{c|}{time (sec)} & \multicolumn{2}{c|}{nOLS} \\
\cline{4-12}
 & & & PDLP & COPT & cross & cross & primal & dual & lin. ind. & primal & dual \\
\midrule

\endfirsthead

\toprule
 \multirow{2}{*}{prob} & \multirow{2}{*}{nRows} & \multirow{2}{*}{nCols} & \multicolumn{3}{c|}{\# supp.} & \multicolumn{4}{c|}{time (sec)} & \multicolumn{2}{c|}{nOLS} \\
\cline{4-12}
 & & & PDLP & COPT & cross & cross & primal & dual & lin. ind. & primal & dual \\
\midrule

\endhead
\endfoot
25fv47 & 821 & 1571 & 601 & 583 & 584 & 0.13 & 0.02 & 0.08 & 0.03 & 17 & 50 \\
80bau3b & 2262 & 9799 & 1852 & 1759 & 1742 & \textcolor{blue}{1.28a} & 0.16 & 0.37 & 0.74 & 69 & 225 \\
adlittle & 56 & 97 & 64 & 45 & 45 & \textcolor{blue}{0.01p} & 0.01 & 0.00 & 0.00 & 17 & 2 \\
afiro & 27 & 32 & 14 & 13 & 13 & 0.00 & 0.00 & 0.00 & 0.00 & 2 & 2 \\
agg & 488 & 163 & 62 & 57 & 57 & 0.01 & 0.00 & 0.00 & 0.00 & 6 & 2 \\
agg2 & 516 & 302 & 141 & 120 & 121 & 0.01 & 0.01 & 0.00 & 0.00 & 26 & 4 \\
agg3 & 516 & 302 & 144 & 124 & 124 & 0.02 & 0.02 & 0.00 & 0.00 & 26 & 2 \\
bandm & 305 & 472 & 326 & 294 & 294 & 0.01 & 0.00 & 0.00 & 0.00 & 2 & 2 \\
beaconfd & 173 & 262 & 89 & 89 & 89 & 0.00 & 0.00 & 0.00 & 0.00 & 2 & 2 \\
blend & 74 & 83 & 56 & 54 & 54 & 0.00 & 0.00 & 0.00 & 0.00 & 3 & 3 \\
bnl1 & 643 & 1175 & 689 & 451 & 449 & 0.35 & 0.32 & 0.01 & 0.02 & 230 & 6 \\
bnl2 & 2324 & 3489 & 1523 & 1170 & 1165 & \textcolor{blue}{1.90p} & 0.68 & 0.22 & 0.99 & 359 & 79 \\
boeing1 & 351 & 384 & 210 & 195 & 196 & \textcolor{blue}{0.03a} & 0.01 & 0.02 & 0.00 & 18 & 26 \\
boeing2 & 166 & 143 & 69 & 55 & 57 & 0.01 & 0.00 & 0.00 & 0.00 & 14 & 30 \\
bore3d & 233 & 315 & 127 & 126 & 126 & 0.00 & 0.00 & 0.00 & 0.00 & 2 & 2 \\
brandy & 220 & 249 & 136 & 134 & 134 & 0.01 & 0.00 & 0.00 & 0.00 & 2 & 2 \\
capri & 271 & 353 & 246 & 220 & 220 & 0.02 & 0.02 & 0.00 & 0.00 & 35 & 8 \\
cre-a & 3516 & 4067 & 578 & 493 & 490 & 10.96 & 0.16 & 9.91 & 0.89 & 110 & 1026 \\
cre-c & 3068 & 3678 & 591 & 508 & 503 & 8.69 & 0.14 & 7.76 & 0.80 & 99 & 766 \\
cycle & 1903 & 2857 & 1032 & 358 & 229 & 1.05 & 0.33 & 0.37 & 0.34 & 403 & 390 \\
czprob & 929 & 3523 & 925 & 866 & 866 & \textcolor{blue}{0.21p} & 0.17 & 0.00 & 0.03 & 60 & 2 \\
d2q06c & 2171 & 5167 & 1645 & 1542 & 1542 & \textcolor{blue}{1.40d} & 0.13 & 0.57 & 0.69 & 31 & 125 \\
d6cube & 415 & 6184 & 138 & 115 & 110 & 0.60 & 0.02 & 0.56 & 0.01 & 21 & 284 \\
degen2 & 444 & 534 & 207 & 207 & 207 & 0.28 & 0.00 & 0.25 & 0.03 & 2 & 206 \\
degen3 & 1503 & 1818 & 645 & 637 & 637 & 7.90 & 0.01 & 7.09 & 0.79 & 3 & 776 \\
e226 & 223 & 282 & 129 & 127 & 127 & 0.01 & 0.00 & 0.00 & 0.00 & 2 & 5 \\
etamacro & 400 & 688 & 299 & 269 & 269 & \textcolor{blue}{0.04d} & 0.02 & 0.01 & 0.01 & 25 & 48 \\
fffff800 & 524 & 854 & 367 & 313 & 311 & \textcolor{blue}{0.05p} & 0.02 & 0.01 & 0.01 & 53 & 36 \\
finnis & 497 & 614 & 246 & 228 & 230 & \textcolor{blue}{0.05d} & 0.02 & 0.02 & 0.01 & 47 & 64 \\
fit1d & 24 & 1026 & 12 & 12 & 12 & 0.00 & 0.00 & 0.00 & 0.00 & 2 & 0 \\
fit1p & 627 & 1677 & 634 & 627 & 627 & 0.00 & 0.00 & 0.00 & 0.00 & 2 & 0 \\
fit2d & 25 & 10500 & 22 & 20 & 20 & 0.01 & 0.01 & 0.00 & 0.00 & 3 & 0 \\
fit2p & 3000 & 13525 & 3007 & 2997 & 2996 & \textcolor{blue}{0.07d} & 0.00 & 0.01 & 0.05 & 2 & 5 \\
forplan & 161 & 421 & 85 & 83 & 83 & \textcolor{blue}{0.01p} & 0.00 & 0.01 & 0.00 & 3 & 25 \\
ganges & 1309 & 1681 & 1278 & 1175 & 1173 & \textcolor{blue}{0.18d} & 0.08 & 0.00 & 0.10 & 103 & 7 \\
gfrd-pnc & 616 & 1092 & 349 & 336 & 336 & 0.17 & 0.00 & 0.15 & 0.02 & 2 & 125 \\
greenbeb & 2392 & 5405 & 1074 & 939 & 935 & \textcolor{blue}{3.48p} & 0.25 & 2.21 & 1.02 & 122 & 598 \\
grow15 & 300 & 645 & 533 & 299 & 300 & 0.35 & 0.35 & 0.00 & 0.00 & 234 & 0 \\
grow22 & 440 & 946 & 849 & 440 & 440 & 0.61 & 0.61 & 0.00 & 0.00 & 410 & 0 \\
grow7 & 140 & 301 & 237 & 140 & 140 & 0.05 & 0.05 & 0.00 & 0.00 & 98 & 0 \\
israel & 174 & 142 & 79 & 69 & 68 & \textcolor{blue}{0.04p} & 0.01 & 0.00 & 0.01 & 23 & 0 \\
kb2 & 43 & 41 & 30 & 27 & 27 & 0.00 & 0.00 & 0.00 & 0.00 & 2 & 0 \\
ken-07 & 2426 & 3602 & 2235 & 2234 & 2234 & 1.23 & 0.00 & 0.04 & 1.18 & 2 & 65 \\
lotfi & 153 & 308 & 129 & 99 & 99 & 0.01 & 0.01 & 0.00 & 0.00 & 29 & 2 \\
maros-r7 & 3136 & 9408 & 3135 & 3136 & 3136 & 0.24 & 0.03 & 0.11 & 0.08 & 2 & 2 \\
maros & 846 & 1443 & 351 & 340 & 339 & \textcolor{blue}{0.23p} & 0.01 & 0.17 & 0.05 & 8 & 161 \\
modszk1 & 687 & 1620 & 666 & 666 & 666 & 0.05 & 0.00 & 0.02 & 0.03 & 2 & 20 \\
nesm & 662 & 2923 & 758 & 553 & 538 & \textcolor{blue}{0.17a} & 0.17 & 0.00 & 0.00 & 218 & 2 \\
osa-07 & 1118 & 23949 & 358 & 355 & 355 & 0.06 & 0.01 & 0.02 & 0.02 & 8 & 6 \\
osa-14 & 2337 & 52460 & 882 & 781 & 781 & 0.50 & 0.34 & 0.03 & 0.13 & 92 & 6 \\
osa-30 & 4350 & 100024 & 1741 & 1536 & 1536 & 2.01 & 1.48 & 0.06 & 0.46 & 207 & 6 \\
pds-02 & 2953 & 7535 & 1389 & 332 & 319 & 1.77 & 0.78 & 0.37 & 0.62 & 432 & 402 \\
perold & 625 & 1376 & 608 & 546 & 549 & \textcolor{blue}{0.15d} & 0.08 & 0.05 & 0.02 & 38 & 27 \\
pilot.ja & 940 & 1988 & 823 & 680 & 675 & \textcolor{blue}{0.52d} & 0.32 & 0.09 & 0.10 & 97 & 33 \\
pilot & 1441 & 3652 & 1308 & 1284 & 1209 & \textcolor{blue}{10.19a} & 0.33 & 9.62 & 0.22 & 27 & 1628 \\
pilot4 & 410 & 1000 & 382 & 363 & 360 & \textcolor{blue}{0.03d} & 0.01 & 0.01 & 0.01 & 13 & 9 \\
pilot87 & 2030 & 4883 & 1976 & 1840 & 1800 & \textcolor{blue}{27.52a} & 2.48 & 24.68 & 0.34 & 111 & 2325 \\
pilotnov & 975 & 2172 & 1809 & 704 & 647 & \textcolor{blue}{1.80d} & 1.68 & 0.00 & 0.11 & 1154 & 2 \\
qap12 & 3192 & 8856 & 2980 & 2441 & 2449 & \textcolor{blue}{61.98d} & 27.69 & 8.81 & 25.45 & 377 & 222 \\
qap8 & 912 & 1632 & 656 & 466 & 440 & 1.05 & 0.28 & 0.19 & 0.57 & 56 & 103 \\
recipe & 91 & 180 & 24 & 24 & 24 & 0.00 & 0.00 & 0.00 & 0.00 & 2 & 5 \\
sc105 & 105 & 103 & 85 & 85 & 85 & 0.01 & 0.00 & 0.00 & 0.00 & 2 & 6 \\
sc205 & 205 & 203 & 184 & 184 & 184 & 0.01 & 0.00 & 0.00 & 0.00 & 2 & 6 \\
sc50a & 50 & 48 & 42 & 42 & 42 & 0.01 & 0.00 & 0.00 & 0.00 & 2 & 2 \\
sc50b & 50 & 48 & 48 & 48 & 48 & 0.00 & 0.00 & 0.00 & 0.00 & 2 & 2 \\
scagr25 & 471 & 500 & 317 & 307 & 307 & 0.01 & 0.00 & 0.00 & 0.00 & 2 & 2 \\
scagr7 & 129 & 140 & 98 & 97 & 97 & 0.00 & 0.00 & 0.00 & 0.00 & 2 & 0 \\
scfxm1 & 330 & 457 & 253 & 231 & 231 & 0.02 & 0.01 & 0.01 & 0.00 & 13 & 21 \\
scfxm2 & 660 & 914 & 515 & 473 & 471 & 0.06 & 0.03 & 0.02 & 0.01 & 24 & 27 \\
scfxm3 & 990 & 1371 & 754 & 711 & 711 & 0.11 & 0.03 & 0.04 & 0.04 & 35 & 33 \\
scorpion & 388 & 358 & 246 & 245 & 245 & 0.01 & 0.00 & 0.00 & 0.01 & 2 & 2 \\
scrs8 & 490 & 1169 & 276 & 276 & 276 & 0.02 & 0.00 & 0.00 & 0.01 & 2 & 10 \\
scsd1 & 77 & 760 & 31 & 7 & 11 & 0.03 & 0.01 & 0.02 & 0.00 & 13 & 61 \\
scsd6 & 147 & 1350 & 182 & 60 & 60 & 0.05 & 0.03 & 0.01 & 0.00 & 84 & 55 \\
scsd8 & 397 & 2750 & 551 & 147 & 208 & 0.14 & 0.12 & 0.01 & 0.00 & 175 & 21 \\
sctap1 & 300 & 480 & 263 & 164 & 163 & 0.06 & 0.04 & 0.02 & 0.00 & 102 & 46 \\
sctap2 & 1090 & 1880 & 817 & 562 & 561 & 0.26 & 0.11 & 0.06 & 0.09 & 231 & 143 \\
sctap3 & 1480 & 2480 & 1040 & 731 & 729 & 0.53 & 0.19 & 0.13 & 0.21 & 279 & 273 \\
seba & 515 & 1028 & 438 & 438 & 438 & 0.00 & 0.00 & 0.00 & 0.00 & 2 & 2 \\
share1b & 117 & 225 & 98 & 94 & 93 & \textcolor{blue}{0.00p} & 0.00 & 0.00 & 0.00 & 4 & 0 \\
share2b & 96 & 79 & 52 & 48 & 48 & 0.01 & 0.00 & 0.00 & 0.00 & 7 & 11 \\
shell & 536 & 1775 & 392 & 383 & 383 & 0.03 & 0.00 & 0.01 & 0.02 & 2 & 48 \\
ship04l & 402 & 2118 & 261 & 260 & 260 & 0.02 & 0.00 & 0.02 & 0.01 & 2 & 24 \\
ship04s & 402 & 1458 & 281 & 280 & 280 & 0.02 & 0.00 & 0.01 & 0.01 & 2 & 16 \\
ship08l & 778 & 4283 & 423 & 422 & 422 & 0.09 & 0.00 & 0.03 & 0.05 & 2 & 34 \\
ship08s & 778 & 2387 & 448 & 447 & 447 & 0.06 & 0.00 & 0.01 & 0.05 & 2 & 22 \\
ship12l & 1151 & 5427 & 707 & 706 & 706 & 0.21 & 0.00 & 0.04 & 0.16 & 2 & 57 \\
ship12s & 1151 & 2763 & 729 & 728 & 728 & 0.19 & 0.00 & 0.03 & 0.16 & 2 & 47 \\
sierra & 1227 & 2036 & 377 & 361 & 361 & 0.18 & 0.01 & 0.03 & 0.14 & 5 & 92 \\
stair & 356 & 467 & 350 & 349 & 349 & 0.01 & 0.00 & 0.00 & 0.00 & 2 & 2 \\
standata & 359 & 1075 & 71 & 50 & 50 & 0.05 & 0.00 & 0.04 & 0.00 & 10 & 146 \\
standgub & 361 & 1184 & 71 & 50 & 50 & 0.04 & 0.00 & 0.03 & 0.00 & 10 & 146 \\
standmps & 467 & 1075 & 190 & 174 & 174 & 0.04 & 0.00 & 0.03 & 0.00 & 15 & 143 \\
stocfor1 & 117 & 111 & 69 & 69 & 69 & 0.00 & 0.00 & 0.00 & 0.00 & 2 & 3 \\
stocfor2 & 2157 & 2031 & 1267 & 1267 & 1267 & 0.85 & 0.00 & 0.13 & 0.71 & 2 & 123 \\
truss & 1000 & 8806 & 802 & 691 & 691 & 0.90 & 0.22 & 0.56 & 0.12 & 102 & 300 \\
tuff & 333 & 587 & 167 & 122 & 122 & 0.04 & 0.02 & 0.02 & 0.00 & 43 & 60 \\
vtp.base & 198 & 203 & 55 & 55 & 55 & 0.00 & 0.00 & 0.00 & 0.00 & 2 & 2 \\
wood1p & 244 & 2594 & 39 & 39 & 39 & 0.18 & 0.00 & 0.18 & 0.00 & 2 & 134 \\
woodw & 1098 & 8405 & 696 & 550 & 550 & 0.61 & 0.24 & 0.23 & 0.13 & 146 & 81 \\

\bottomrule
\caption{Crossover results ($1e-6$ solutions) for the 100 NETLIB instances. ``p'' means the basis is only primal optimal (dual infeasible). ``d'' means the basis is only dual optimal (primal feasible). ``a'' means the basis is neither primal nor dual infeasible. All OLS subproblems are solved by direct QR factorization.} \label{tbl:netlib-1e-6}
\end{longtable}
}

{\footnotesize
\begin{longtable}{ |c|c|c|c|c|c|c|c|c|c|c|c|c| }

\toprule
 \multirow{2}{*}{prob} & \multirow{2}{*}{nRows} & \multirow{2}{*}{nCols} & \multicolumn{3}{c|}{\# supp.} & \multicolumn{5}{c|}{time (sec)} & \multicolumn{2}{c|}{nOLS} \\
\cline{4-13}
 & & & PDLP & COPT & cross & cross & aux. LP & primal & dual & lin. ind. & primal & dual \\
\midrule

\endfirsthead

\toprule
 \multirow{2}{*}{prob} & \multirow{2}{*}{nRows} & \multirow{2}{*}{nCols} & \multicolumn{3}{c|}{\# supp.} & \multicolumn{5}{c|}{time (sec)} & \multicolumn{2}{c|}{nOLS} \\
\cline{4-13}
 & & & PDLP & COPT & cross & cross & aux. LP & primal & dual & lin. ind. & primal & dual \\
\midrule

\endhead
\endfoot
25fv47 & 821 & 1571 & 600 & 583 & 584 & 1.01 & 0.96 & 0.01 & 0.00 & 0.03 & 1 & 2 \\
80bau3b & 2262 & 9799 & 1851 & 1758 & 1755 & 0.77 & 0.05 & 0.00 & 0.00 & 0.71 & 1 & 1 \\
adlittle & 56 & 97 & 61 & 45 & 44 & 0.01 & 0.01 & 0.00 & 0.00 & 0.00 & 1 & 1 \\
afiro & 27 & 32 & 14 & 13 & 14 & 0.00 & 0.00 & 0.00 & 0.00 & 0.00 & 1 & 1 \\
agg & 488 & 163 & 62 & 57 & 57 & 0.01 & 0.01 & 0.00 & 0.00 & 0.00 & 1 & 1 \\
agg2 & 516 & 302 & 141 & 120 & 121 & 0.01 & 0.01 & 0.00 & 0.00 & 0.00 & 2 & 1 \\
agg3 & 516 & 302 & 144 & 124 & 125 & 0.01 & 0.01 & 0.00 & 0.00 & 0.00 & 2 & 1 \\
bandm & 305 & 472 & 306 & 294 & 294 & 0.07 & 0.06 & 0.00 & 0.00 & 0.00 & 1 & 1 \\
beaconfd & 173 & 262 & 89 & 89 & 89 & 0.01 & 0.00 & 0.00 & 0.00 & 0.00 & 1 & 1 \\
blend & 74 & 83 & 56 & 54 & 54 & 0.01 & 0.00 & 0.00 & 0.00 & 0.00 & 1 & 1 \\
bnl1 & 643 & 1175 & 689 & 451 & 448 & 6.29 & 6.27 & 0.00 & 0.00 & 0.02 & 1 & 1 \\
bnl2 & 2324 & 3489 & 1519 & 1171 & 1168 & 2.22 & 1.19 & 0.00 & 0.01 & 1.01 & 2 & 3 \\
boeing1 & 351 & 384 & 207 & 196 & 195 & 0.04 & 0.03 & 0.00 & 0.00 & 0.00 & 1 & 3 \\
boeing2 & 166 & 143 & 69 & 55 & 55 & 0.01 & 0.01 & 0.00 & 0.00 & 0.00 & 1 & 3 \\
bore3d & 233 & 315 & 126 & 126 & 126 & 0.01 & 0.01 & 0.00 & 0.00 & 0.00 & 1 & 1 \\
brandy & 220 & 249 & 135 & 134 & 134 & 0.05 & 0.04 & 0.00 & 0.00 & 0.00 & 1 & 1 \\
capri & 271 & 353 & 246 & 220 & 225 & 0.02 & 0.02 & 0.00 & 0.00 & 0.00 & 1 & 1 \\
cre-a & 3516 & 4067 & 579 & 492 & 484 & 1.27 & 0.08 & 0.00 & 0.35 & 0.81 & 1 & 41 \\
cre-c & 3068 & 3678 & 590 & 508 & 503 & 2.22 & 0.22 & 0.00 & 1.23 & 0.75 & 2 & 46 \\
cycle & 1903 & 2857 & 994 & 224 & 105 & \textcolor{blue}{2.04p} & 2.00 & 0.00 & 0.01 & 0.03 & 1 & 9 \\
czprob & 929 & 3523 & 924 & 866 & 866 & 0.06 & 0.03 & 0.00 & 0.00 & 0.03 & 1 & 1 \\
d2q06c & 2171 & 5167 & 1612 & 1543 & 1541 & 93.46 & 92.75 & 0.00 & 0.05 & 0.65 & 1 & 11 \\
d6cube & 415 & 6184 & 136 & 115 & 101 & 13.38 & 13.30 & 0.00 & 0.06 & 0.01 & 1 & 7 \\
degen2 & 444 & 534 & 207 & 207 & 207 & 0.33 & 0.13 & 0.00 & 0.16 & 0.03 & 1 & 111 \\
degen3 & 1503 & 1818 & 640 & 637 & 637 & 8.20 & 2.87 & 0.01 & 4.41 & 0.79 & 1 & 475 \\
e226 & 223 & 282 & 127 & 127 & 127 & 0.05 & 0.04 & 0.00 & 0.00 & 0.00 & 1 & 1 \\
etamacro & 400 & 688 & 292 & 271 & 272 & 0.03 & 0.01 & 0.00 & 0.00 & 0.01 & 1 & 12 \\
fffff800 & 524 & 854 & 357 & 312 & 315 & 0.03 & 0.01 & 0.00 & 0.00 & 0.01 & 7 & 1 \\
finnis & 497 & 614 & 259 & 227 & 229 & 0.02 & 0.01 & 0.00 & 0.00 & 0.01 & 2 & 4 \\
fit1d & 24 & 1026 & 12 & 12 & 12 & 0.01 & 0.00 & 0.00 & 0.00 & 0.00 & 1 & 0 \\
fit1p & 627 & 1677 & 634 & 627 & 627 & 0.60 & 0.59 & 0.00 & 0.00 & 0.00 & 1 & 0 \\
fit2d & 25 & 10500 & 22 & 20 & 20 & 0.01 & 0.01 & 0.00 & 0.00 & 0.00 & 1 & 0 \\
fit2p & 3000 & 13525 & 3004 & 2997 & 2997 & 5.08 & 5.05 & 0.00 & 0.00 & 0.03 & 1 & 1 \\
forplan & 161 & 421 & 83 & 83 & 83 & 1.65 & 1.64 & 0.00 & 0.00 & 0.00 & 1 & 1 \\
ganges & 1309 & 1681 & 1278 & 1175 & 1171 & t &  &  &  &  &  &  \\
gfrd-pnc & 616 & 1092 & 349 & 336 & 336 & 0.05 & 0.02 & 0.00 & 0.01 & 0.02 & 1 & 23 \\
greenbeb & 2392 & 5405 & 1048 & 937 & 942 & 114.78 & 113.76 & 0.00 & 0.00 & 1.01 & 1 & 1 \\
grow15 & 300 & 645 & 533 & 299 & 300 & 0.20 & 0.19 & 0.00 & 0.00 & 0.00 & 1 & 0 \\
grow22 & 440 & 946 & 849 & 440 & 440 & 0.38 & 0.37 & 0.00 & 0.00 & 0.00 & 1 & 0 \\
grow7 & 140 & 301 & 237 & 140 & 140 & 0.02 & 0.02 & 0.00 & 0.00 & 0.00 & 1 & 0 \\
israel & 174 & 142 & 80 & 70 & 68 & 0.01 & 0.01 & 0.00 & 0.00 & 0.00 & 7 & 0 \\
kb2 & 43 & 41 & 27 & 27 & 27 & 0.01 & 0.01 & 0.00 & 0.00 & 0.00 & 1 & 0 \\
ken-07 & 2426 & 3602 & 2236 & 2234 & 2234 & 1.22 & 0.03 & 0.00 & 0.00 & 1.18 & 1 & 4 \\
lotfi & 153 & 308 & 126 & 99 & 98 & 0.01 & 0.01 & 0.00 & 0.00 & 0.00 & 1 & 1 \\
maros-r7 & 3136 & 9408 & 3136 & 3136 & 3136 & 0.33 & 0.28 & 0.01 & 0.00 & 0.01 & 1 & 0 \\
maros & 846 & 1443 & 345 & 340 & 339 & 1.84 & 1.77 & 0.00 & 0.01 & 0.05 & 3 & 11 \\
modszk1 & 687 & 1620 & 666 & 666 & 666 & 0.05 & 0.01 & 0.00 & 0.00 & 0.03 & 1 & 1 \\
nesm & 662 & 2923 & 726 & 550 & 545 & t &  &  &  &  &  &  \\
osa-07 & 1118 & 23949 & 357 & 355 & 355 & 0.16 & 0.13 & 0.00 & 0.00 & 0.02 & 1 & 1 \\
osa-14 & 2337 & 52460 & 873 & 781 & 781 & f &  &  &  &  &  &  \\
osa-30 & 4350 & 100024 & 1733 & 1536 & 1536 & f &  &  &  &  &  &  \\
pds-02 & 2953 & 7535 & 1379 & 332 & 320 & 0.40 & 0.06 & 0.00 & 0.02 & 0.31 & 3 & 30 \\
perold & 625 & 1376 & 580 & 546 & 547 & \textcolor{blue}{54.13d} & 54.10 & 0.00 & 0.00 & 0.02 & 1 & 1 \\
pilot.ja & 940 & 1988 & 789 & 681 & 661 & t &  &  &  &  &  &  \\
pilot & 1441 & 3652 & 1299 & 1287 & 1298 & t &  &  &  &  &  &  \\
pilot4 & 410 & 1000 & 374 & 364 & 364 & 99.07 & 99.06 & 0.00 & 0.00 & 0.01 & 2 & 1 \\
pilot87 & 2030 & 4883 & 1856 & 1840 & 1840 & t &  &  &  &  &  &  \\
pilotnov & 975 & 2172 & 1809 & 708 & 642 & \textcolor{blue}{54.40d} & 54.26 & 0.03 & 0.00 & 0.10 & 11 & 1 \\
qap12 & 3192 & 8856 & 2940 & 2462 & 2446 & 63.12 & 32.86 & 0.06 & 4.88 & 25.29 & 1 & 112 \\
qap8 & 912 & 1632 & 656 & 466 & 418 & 0.53 & 0.09 & 0.00 & 0.16 & 0.26 & 1 & 96 \\
recipe & 91 & 180 & 24 & 24 & 24 & 0.01 & 0.00 & 0.00 & 0.00 & 0.00 & 1 & 1 \\
sc105 & 105 & 103 & 85 & 85 & 85 & 0.01 & 0.00 & 0.00 & 0.00 & 0.00 & 1 & 1 \\
sc205 & 205 & 203 & 184 & 184 & 184 & 0.01 & 0.01 & 0.00 & 0.00 & 0.00 & 1 & 1 \\
sc50a & 50 & 48 & 42 & 42 & 42 & 0.01 & 0.00 & 0.00 & 0.00 & 0.00 & 1 & 1 \\
sc50b & 50 & 48 & 48 & 48 & 48 & 0.01 & 0.00 & 0.00 & 0.00 & 0.00 & 1 & 1 \\
scagr25 & 471 & 500 & 317 & 307 & 307 & 0.01 & 0.01 & 0.00 & 0.00 & 0.00 & 1 & 1 \\
scagr7 & 129 & 140 & 98 & 97 & 97 & 0.00 & 0.00 & 0.00 & 0.00 & 0.00 & 1 & 0 \\
scfxm1 & 330 & 457 & 248 & 231 & 232 & 0.03 & 0.02 & 0.00 & 0.00 & 0.00 & 2 & 2 \\
scfxm2 & 660 & 914 & 501 & 473 & 471 & 0.06 & 0.04 & 0.00 & 0.00 & 0.01 & 1 & 3 \\
scfxm3 & 990 & 1371 & 754 & 711 & 711 & 0.15 & 0.10 & 0.00 & 0.00 & 0.04 & 1 & 1 \\
scorpion & 388 & 358 & 245 & 245 & 245 & 0.02 & 0.01 & 0.00 & 0.00 & 0.01 & 1 & 1 \\
scrs8 & 490 & 1169 & 276 & 276 & 276 & 0.05 & 0.03 & 0.00 & 0.00 & 0.01 & 1 & 1 \\
scsd1 & 77 & 760 & 31 & 7 & 11 & 0.02 & 0.01 & 0.00 & 0.01 & 0.00 & 1 & 59 \\
scsd6 & 147 & 1350 & 182 & 63 & 56 & 0.02 & 0.01 & 0.00 & 0.01 & 0.00 & 1 & 34 \\
scsd8 & 397 & 2750 & 551 & 147 & 143 & 0.16 & 0.14 & 0.00 & 0.01 & 0.00 & 1 & 21 \\
sctap1 & 300 & 480 & 263 & 164 & 168 & 0.04 & 0.03 & 0.00 & 0.00 & 0.01 & 4 & 5 \\
sctap2 & 1090 & 1880 & 811 & 562 & 564 & 0.13 & 0.03 & 0.00 & 0.00 & 0.09 & 3 & 5 \\
sctap3 & 1480 & 2480 & 1038 & 731 & 732 & 0.26 & 0.03 & 0.00 & 0.01 & 0.21 & 3 & 21 \\
seba & 515 & 1028 & 438 & 438 & 438 & 0.01 & 0.01 & 0.00 & 0.00 & 0.00 & 1 & 1 \\
share1b & 117 & 225 & 95 & 94 & 94 & 0.03 & 0.03 & 0.00 & 0.00 & 0.00 & 1 & 0 \\
share2b & 96 & 79 & 52 & 48 & 48 & 0.04 & 0.03 & 0.00 & 0.00 & 0.00 & 1 & 2 \\
shell & 536 & 1775 & 391 & 383 & 383 & 0.03 & 0.01 & 0.00 & 0.00 & 0.02 & 1 & 2 \\
ship04l & 402 & 2118 & 261 & 260 & 260 & 0.03 & 0.02 & 0.00 & 0.00 & 0.01 & 1 & 1 \\
ship04s & 402 & 1458 & 281 & 280 & 280 & 0.03 & 0.02 & 0.00 & 0.00 & 0.01 & 1 & 4 \\
ship08l & 778 & 4283 & 422 & 422 & 422 & 0.07 & 0.02 & 0.00 & 0.00 & 0.05 & 1 & 1 \\
ship08s & 778 & 2387 & 447 & 447 & 447 & 0.07 & 0.02 & 0.00 & 0.00 & 0.05 & 1 & 1 \\
ship12l & 1151 & 5427 & 707 & 706 & 706 & 0.20 & 0.03 & 0.00 & 0.00 & 0.16 & 1 & 1 \\
ship12s & 1151 & 2763 & 728 & 728 & 728 & 0.18 & 0.02 & 0.00 & 0.00 & 0.16 & 1 & 1 \\
sierra & 1227 & 2036 & 373 & 361 & 361 & 0.16 & 0.02 & 0.00 & 0.00 & 0.14 & 1 & 7 \\
stair & 356 & 467 & 350 & 349 & 349 & 0.20 & 0.20 & 0.00 & 0.00 & 0.00 & 1 & 1 \\
standata & 359 & 1075 & 71 & 50 & 50 & 0.01 & 0.01 & 0.00 & 0.00 & 0.00 & 1 & 1 \\
standgub & 361 & 1184 & 71 & 50 & 50 & 0.01 & 0.01 & 0.00 & 0.00 & 0.00 & 1 & 6 \\
standmps & 467 & 1075 & 190 & 174 & 174 & 0.02 & 0.01 & 0.00 & 0.00 & 0.00 & 1 & 6 \\
stocfor1 & 117 & 111 & 69 & 69 & 69 & 0.01 & 0.01 & 0.00 & 0.00 & 0.00 & 1 & 1 \\
stocfor2 & 2157 & 2031 & 1267 & 1267 & 1267 & 0.76 & 0.02 & 0.00 & 0.02 & 0.72 & 1 & 17 \\
truss & 1000 & 8806 & 802 & 691 & 689 & 0.73 & 0.08 & 0.00 & 0.47 & 0.12 & 1 & 262 \\
tuff & 333 & 587 & 165 & 122 & 113 & 0.64 & 0.64 & 0.00 & 0.00 & 0.00 & 1 & 1 \\
vtp.base & 198 & 203 & 55 & 55 & 55 & 0.01 & 0.00 & 0.00 & 0.00 & 0.00 & 1 & 1 \\
wood1p & 244 & 2594 & 39 & 39 & 39 & 1.40 & 1.40 & 0.00 & 0.00 & 0.00 & 1 & 1 \\
woodw & 1098 & 8405 & 696 & 550 & 550 & 0.19 & 0.05 & 0.00 & 0.00 & 0.13 & 1 & 1 \\

\bottomrule
\caption{Crossover results with auxiliary LPs ($1e-8$ solutions) for the 100 NETLIB instances. ``p'' means the basis is only primal optimal (dual infeasible). ``d'' means the basis is only dual optimal (primal infeasible). ``a'' means the basis is near-optimal but primal-dual infeasible. ``t'' means crossover timed out. ``f'' means crossover failed. All OLS subproblems are solved by direct QR factorization.} \label{tbl:netlib-lp-1e-8}
\end{longtable}
}

\end{document}